\crefname{hypothesis}{Hypothesis}{Hypotheses}
\title{High-Order Meshfree Surface Integration, Including Singular Integrands\thanks{
\funding{We acknowledge the support of the Natural Sciences and
Engineering Research Council of Canada (NSERC), [funding reference numbers 579365-2023 (DV), RGPIN-2022-03302 (SR)].}}}
\author{Daniel R. Venn\thanks{Department of Mathematics, Simon Fraser University, Burnaby, BC 
  (\email{dvenn@sfu.ca}).}
\and Steven J. Ruuth\thanks{Department of Mathematics, Simon Fraser University, Burnaby, BC 
  (\email{sruuth@sfu.ca}).}}
\newcommand*{\addFileDependency}[1]{%
  \typeout{(#1)}%
  \@addtofilelist{#1}%
  \IfFileExists{#1}{}{\typeout{No file #1.}}%
}
\global\long\def\b#1{\left(#1\right)}%
\global\long\def\abs#1{\left|#1\right|}%
\global\long\def\d{\text{d}}%
\global\long\def\eval{\biggr|}%
\global\long\def\sb#1{\left[#1\right]}%
\global\long\def\cb#1{\left\{  #1\right\}  }%
\global\long\def\lc{\varepsilon}%
\global\long\def\R{\mathbb{R}}%
\global\long\def\C{\mathbb{C}}%
\global\long\def\code#1{\mathtt{#1}}%
\global\long\def\O{\mathcal{O}}%
\global\long\def\ss{\subseteq}%
\global\long\def\norm#1{\left\Vert #1\right\Vert }%
\global\long\def\vp{\varphi}%
\global\long\def\F{\mathcal{F}}%
\global\long\def\C{\mathbb{C}}%
\global\long\def\H{\mathcal{H}}%
\global\long\def\E{\mathcal{E}}%
\global\long\def\L{\mathcal{L}}%
\global\long\def\G{\mathcal{G}}%
\global\long\def\vc#1{\boldsymbol{#1}}%
\global\long\def\x{\vc x}%
\global\long\def\y{\vc y}%
\begin{document}

\maketitle

\begin{abstract}
 We develop and test high-order methods for integration on surface
point clouds. The task of integrating a function on a surface arises
in a range of applications in engineering and the sciences, particularly
those involving various integral methods for partial differential
equations. Mesh-based methods require a curved mesh for high-order
convergence, which can be difficult to reliably obtain on many surfaces,
and most meshfree methods require the ability to integrate a set of
functions (such as radial basis functions) exactly on the domain of
interest; these integrals are generally not known in closed form on
most surfaces. We describe two methods for integrating on arbitrary,
piecewise-smooth surfaces with or without boundary. Our approaches
do not require a particular arrangement of points or an initial triangulation
of the surface, making them completely meshfree. We also show how
the methods can be extended to handle singular integrals while maintaining
high accuracy without changing the point density near singularities.

\end{abstract}

\begin{keywords}
  numerical integration, meshfree methods, radial basis functions, surface integration
\end{keywords}

\begin{AMS}
  	 	65D30, 65D32, 65D12, 65N35
\end{AMS}
\section{Introduction}
A classical problem in a wide range of scientific disciplines is estimating
$\int_{S}f$ from data obtained in a domain $S$. There are a variety
of approaches, including solving moment fitting equations \cite{thiag14,thiag16},
interpolating data with radial basis functions (RBFs) \cite{shu03,glaub23},
integrating polynomials on polyhedra (often for use with finite elements)
\cite{holdy08,mousa10}, and Green's theorem-based methods for dimension
reduction \cite{gunde21}. The majority of these methods (except for
some RBF methods) typically determine quadrature points and weights
for some predetermined class of domains such as polyhedra or domains
bounded by algebraic curves. In cases where $f$ is not known everywhere,
evaluating $f$ at quadrature points requires another interpolation
step. Most methods (including many RBF methods) also require knowledge
of the moments of a class of functions on the domain of interest.
These moments are generally unknown, except on the simplest domains.

Surfaces present a particular challenge for integration as they are
often more difficult to mesh. For meshed methods, it should be noted
that simply increasing the degree of the polynomial used to interpolate
a function on a standard surface mesh with flat, polygonal elements
will not improve the order of convergence beyond second order; the
surface itself must also be approximated. This can be done with curved
elements, such as those used in high-order surface finite element methods
\cite{demlo09}, or by using some other approximation of the surface,
such as a least-squares approach combined with standard triangular
elements as in \cite{ray12}. We note that actually producing a high-order
surface mesh can be challenging and time-consuming, and often involves
interpolating a low-order mesh, which may require using a meshfree
method (see \cite{zhao19}, for example).

Another possible solution is to use a standard quadrature method in
the embedding space and discretize surface integrals using a regularization
of the Dirac delta. Such approaches can be used with level-set methods
(see \cite{engqu05}, for example) or when a closest-point representation
is available \cite{kubli16}, and can be made high order with a suitable
choice of regularization \cite{waldn99}. However, these techniques
require increasingly expensive discretizations of a neighbourhood
of the surface in the embedding space as the accuracy of the regularization
is improved. Furthermore, an approximation of the signed distance
function for the surface is needed, which may involve additional computation.
These methods are also generally only suitable for differentiable
closed surfaces for which a level set description is known, excluding
surfaces with boundary. Moreover, for surfaces that are only piecewise-smooth,
convergence is at a reduced rate.

For meshfree methods, techniques that rely on knowing the exact integral
of a class of functions (such as RBFs) on the domain are typically
not available, since these exact integrals are often unknown. Meshfree
integration on surface point clouds is therefore often limited to
Monte Carlo methods, which have been employed for eigenfunction estimation
using the weak form of differential operators \cite{harli23,yan23}.
Even though the underlying meshfree interpolation technique used is
high-order, Monte Carlo integration is not, which limits the overall
rate of convergence. Monte Carlo integration also requires knowledge
of the distribution from which a point cloud is sampled, which may
not be available, and it may not be straightforward to sample points
from a surface uniformly. If the points satisfy a minimum distance
condition, so that no two points are too close together, a potential
integration technique for non-uniform point clouds is to use local
projections to the tangent plane and a triangulation to compute Voronoi
cell areas (see \cite{luo09}), though this has a limited order of
convergence. Existing high-order meshfree integration methods on
surfaces are often restricted to specific surfaces where exact integrals
of RBFs are known, such as the sphere \cite{kinde22} or an explicit basis for a Hilbert space on the surface can be written down, as in Stein reproducing kernel methods from the statistics literature (see \cite{barp22}, for example). Reeger, Fornberg,
and Watts \cite{reege16} have also introduced a high-order ($\O\b{h^{7}}$,
where $h$ is the point spacing), RBF-based method for general closed
surfaces that requires first obtaining a triangulation of the point
cloud. This is essentially a hybrid approach between a meshfree and
meshed method in that while a triangulation of the point cloud is
required, a meshfree method is ultimately used to compute integrals
on each surface patch projected to a triangle with high accuracy.
A version for surfaces with boundary has also been developed \cite{reege18}.

Our contribution consists of two super-algebraically convergent methods
based on the divergence theorem and norm-minimizing Hermite-Birkhoff interpolation
schemes. Such interpolation schemes include Hermite RBFs \cite{sun94,frank98}
and minimum norm methods using Fourier series \cite{chand18,venn}.
We review these interpolation methods in Section \ref{sec:Background},
including a generalized convergence statement in Proposition \ref{prop:Let--be},
a discussion on a general approach to high-order convergence in \ref{subsec:High-Order-Bounds-on},
and two examples in \ref{subsec:Hilbert-Spaces-for}. We then introduce
our integration methods in Section \ref{sec:Methods-and-Analysis}.
Our approaches do not require specific quadrature points (the points
may be randomly scattered and non-uniformly selected) and only require
the knowledge of basic domain information such as normal vectors at
scattered points on the surface. Method 1 can be used to directly evaluate
the ratio of two integrals on a closed surface or a surface with boundary; this is useful when a constant multiple of the integral
is all that is needed. For example, the method could be used when
the average value of a function on the surface is desired. Method 2 requires surfaces to have boundary, but we discuss simple strategies
for dividing closed surfaces into multiple surfaces with boundary
in Subsection \ref{subsec:Meshfree-Domain-Decomposition}. Unlike
the first method, method 2 estimates integrals themselves
rather than a ratio, which can be used to estimate surface area with
high precision. We show that these schemes converge at the same rate
as the underlying interpolation methods, regardless of the point distribution;
this leads to arbitrarily high-order schemes. Then, we test the integration
techniques for a variety of problems on flat domains and on surfaces
in Section \ref{sec:Numerical-Tests}. We conclude by introducing
a more general view of norm-minimizing meshfree methods suitable for
handling singularities, and then adapt our methods for singular integrals
in Subsection \ref{subsec:High-Order-Approximation-of}; such integrals
appear in boundary integral solutions to PDEs.
Code for running the numerical tests in this paper is available at \href{https://github.com/venndaniel/meshfree_integration}{https://github.com/venndaniel/meshfree\_integration}.

\section{Background\label{sec:Background}}

We start by giving a brief introduction to solving partial differential
equations (PDEs) with meshfree methods. In particular, we look at
symmetric, strong-form collocation approaches that choose a Hermite-Birkhoff
interpolant that minimizes a norm in a Hilbert space. These approaches
can be viewed as constrained optimization problems, which is a vital
point for understanding Method 1 in Subsection \ref{subsec:Method-=0000231:-Poisson}.

\subsection{Norm-Minimizing Hermite-Birkhoff Interpolants\label{subsec:Norm-Minimizing-Hermite-Birkhoff}}

To solve a PDE, one possible approach is strong-form collocation.
In such methods, we search for an approximate solution $\tilde{u}$
that satisfies the PDE on a discrete set of points $\cb{\x_{j}}_{j=1}^{\tilde{N}}$
contained in a domain $S\subset\R^m$. More precisely, if we consider the problem
\begin{align}
\F u & =f\text{ on }S,\label{eq:pde1}
\end{align}
where $\F$ is a linear differential operator on $S$ and $f:S\to\C$
is a function, we would search for an approximate solution $\tilde{u}$
such that
\begin{equation}
\b{\F\tilde{u}}\b{\x_{j}}=f\b{\x_{j}}\text{ for each } j\in\cb{1,2,\ldots,\tilde{N}}.\label{eq:hbinterp}
\end{equation}

Boundary conditions can be handled in a number of ways. A standard
approach would be to select $\tilde{u}$ from a finite-dimensional
space of functions that match the boundary condition of interest.
For example, a cosine series could be used for a 1D problem with homogeneous
Neumann boundary conditions. However, we are interested in irregular
domains and surfaces, where suitable function spaces are generally
not known. One possible approach is to simply include the boundary
conditions as additional interpolation conditions. That is, if we
have the boundary condition
\begin{equation}
\G u=g\text{ on }\partial S,\label{eq:pdebound}
\end{equation}
we further require that in addition to satisfying Eq. (\ref{eq:hbinterp}),
our approximate solution $\tilde{u}$ satisfies
\begin{equation}
\b{\G\tilde{u}}\b{\y_{j}}=g\b{\y_{j}}\text{ for each }j\in\cb{1,2,\ldots,\tilde{N}_{\partial}},\label{eq:hbinterp boundary}
\end{equation}
where $\{\y_{j}\}_{j=1}^{\tilde{N}_{\partial}}\subset\partial S$
is a set of points on the boundary of $S$. The goal is then to ensure
that $\F\tilde{u}\to f$ on $S$ and $\G\tilde{u}\to g$ on $\partial S$
in some sense as $\tilde{N},\tilde{N}_{\partial}\to\infty$. If the
PDE given by (\ref{eq:pde1}) and (\ref{eq:pdebound}) is suitably
well-posed, we may then be able to show that $\tilde{u}\to u$, with
the manner of convergence ($L^{2},L^{\infty},H^{p}$, pointwise, etc.)
depending on the PDE and method chosen (see, for example, Proposition 11 of \cite{venn}). 

Of course, simply imposing $\b{\F\tilde{u}}\b{\x_{j}}=f\b{\x_{j}}$
on each $\x_{j}$ is not enough to ensure that $\F\tilde{u}\to f$
as more collocation points are added. A classic counterexample is
the interpolation of Runge's function ($f\,(x) =1/(1+25x^{2})$) with
polynomials of increasing degree on $\sb{-1,1}$ and evenly spaced
points $\cb{\x_{j}}$; this corresponds to an identity operator $\F$
and results in an interpolant $\tilde{u}$ that is unbounded as $\tilde{N}\to\infty$.
While basic, this counterexample demonstrates one possible manner
in which interpolation can fail: the interpolant can become unbounded.
This motivates a constrained approach. For some Hilbert space of functions $\H$ defined on a domain $\Omega \supseteq S$, we solve
\begin{align}
\text{minimize, over }\tilde{v}\in\H: & \norm{\tilde{v}}_{\H},\label{eq:optprob}\\
\text{subject to } & \b{\F\tilde{v}}\b{\x_{j}}=f\b{\x_{j}}\text{ for each \ensuremath{j}\ensuremath{\in\cb{1,2,\ldots,\tilde{N}},}}\nonumber \\
 & \b{\G\tilde{v}}\b{\y_{j}}=g\b{\y_{j}}\text{ for each \ensuremath{j}\ensuremath{\in\cb{1,2,\ldots,\tilde{N}_{\partial}}.}}\nonumber 
\end{align}
It is convenient to define a linear operator $\L:\H\to\C^{\tilde{N}+\tilde{N}_{\partial}}$
and some $\vc f\in\C^{\tilde{N}+\tilde{N}_{\partial}}$ such that
this problem becomes

\begin{align}
\text{minimize, over }\tilde{v}\in\H: & \norm{\tilde{v}}_{\H},\label{eq:optproblem}\\
\text{subject to } & \L\tilde{v}=\vc f.\nonumber 
\end{align}

For this problem to be uniquely solvable, we require $\L$ to be bounded
in the $\H\to\C^{\tilde{N}+\tilde{N}_{\partial}}$ sense, which means
that both $\F$ and $\G$ evaluated at individual points must be bounded
as operators from $\H$ to $\C$. We also need the constraint set
to be non-empty for each $\vc f$; $\L$ must be surjective. We note that for reasonable choices of $\H$ (examples are discussed in Subsection \ref{subsec:Hilbert-Spaces-for}), $\L$  is generally surjective for a finite number of collocation points even if the PDE given by (\ref{eq:pde1}) and
(\ref{eq:pdebound}) is not solvable globally. In this case, problem
(\ref{eq:optproblem}) has a few important properties. Let $\tilde{u}\in\H$
be the solution to (\ref{eq:optproblem}), then $\tilde{u}\in\mathcal{R}\,(\L^{*})=(\mathcal{N}\,(\L))^{\perp}$
(noting that $\mathcal{R}\b{\L^{*}}$ is finite-dimensional and therefore
closed). We can therefore find $\tilde{u}$ by solving
\begin{align}
\L\L^{*}\vc{\beta} & =\vc f,\text{\ensuremath{\quad}}\tilde{u}=\L^{*}\vc{\beta}.\label{eq:kernel}
\end{align}

The linear problem given by (\ref{eq:kernel}) is finite-dimensional,
and $\L\L^{*}$ is symmetric and positive-definite as long as the
constraint set is non-empty; $\L\L^{*}$ corresponds to the interpolation
matrix $\vc{\Phi}$ for (Hermite) RBF methods. This gives us the
term “symmetric meshfree methods” to refer to methods that arise
from the solution of (\ref{eq:optproblem}), since $\L\L^{*}$ is
symmetric even when the operator $\F$ is not. Note that there are
also non-symmetric meshfree methods, such as Kansa's \cite{kansa90},
for which the methods and results of this paper do not apply.

Now, as long as $\tilde{u}$ remains bounded as $\tilde{N},\tilde{N}_{\partial}\to\infty$,
$\tilde{u}$ converges in $\H$. This is due to the following result
in functional analysis. We stated versions of this result specific
to meshfree methods with affine constraints in our previous works
\cite{venn,venn24b}. The version presented here generalizes these
results to convex constraint sets in Hilbert spaces.
\begin{proposition}
\label{prop:Let--be}Let $\cb{V_{n}}_{n=1}^{\infty}$ be a collection
of closed, non-empty, convex sets in a Hilbert space $\H$ such that
$V_{1}\supseteq V_{2}\supseteq\ldots$ and define
\[
\tilde{u}_{n}=\text{argmin}\cb{\norm{\tilde{u}}_{\H}:\tilde{u}\in V_{n}}.
\]
Assume $\cb{\norm{\tilde{u}_{n}}_{\H}}_{n=1}^{\infty}$ is bounded
by a constant $B>0$, then $\tilde{u}_{n}$ converges in $\H$ to
some $\tilde{u}_{\infty}\in\bigcap_{n=1}^{\infty}V_{n}$.
\end{proposition}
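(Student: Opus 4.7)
The plan is to first extract a real-valued convergent scalar sequence from the norms and then use the parallelogram law, exactly as in the classical proof that a closed convex set in a Hilbert space has a unique element of minimum norm, to upgrade this to a Cauchy property of the minimizers themselves.

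First I would observe that because the sets are nested, $V_1 \supseteq V_2 \supseteq \cdots$, the sequence of minimum norms $\|\tilde{u}_n\|_{\H}$ is monotonically non-decreasing: any $\tilde{u} \in V_{n+1}$ also lies in $V_n$, so the infimum over $V_{n+1}$ cannot be smaller than the infimum over $V_n$. Combined with the hypothesis $\|\tilde{u}_n\|_{\H} \leq B$, this monotone bounded real sequence converges to some limit $L \geq 0$, and in particular $\|\tilde{u}_n\|_{\H}^2 \to L^2$ is Cauchy in $\R$.

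The main step is to leverage convexity via the parallelogram identity. For $m \geq n$, both $\tilde{u}_n$ and $\tilde{u}_m$ lie in $V_n$ (since $\tilde{u}_m \in V_m \subseteq V_n$), so by convexity the midpoint $\tfrac{1}{2}(\tilde{u}_n + \tilde{u}_m)$ is also in $V_n$. By the minimality of $\tilde{u}_n$ on $V_n$, this yields $\|\tfrac{1}{2}(\tilde{u}_n+\tilde{u}_m)\|_{\H}^2 \geq \|\tilde{u}_n\|_{\H}^2$. The parallelogram law then gives
\[
\|\tilde{u}_n - \tilde{u}_m\|_{\H}^2 = 2\|\tilde{u}_n\|_{\H}^2 + 2\|\tilde{u}_m\|_{\H}^2 - 4\left\|\tfrac{1}{2}(\tilde{u}_n+\tilde{u}_m)\right\|_{\H}^2 \leq 2\bigl(\|\tilde{u}_m\|_{\H}^2 - \|\tilde{u}_n\|_{\H}^2\bigr),
\]
and the right-hand side tends to $0$ as $m,n \to \infty$ by the convergence of $\|\tilde{u}_n\|_{\H}^2$. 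Hence $\{\tilde{u}_n\}$ is Cauchy in $\H$, and by completeness it converges to some $\tilde{u}_\infty \in \H$.

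Finally, I would verify that $\tilde{u}_\infty \in \bigcap_{n=1}^\infty V_n$: for each fixed $n$, the tail $\{\tilde{u}_m\}_{m\geq n}$ lies in $V_n$, and since $V_n$ is closed, the limit $\tilde{u}_\infty$ belongs to $V_n$ as well. I expect the main obstacle to be simply identifying the right bookkeeping, namely that only the midpoint of $\tilde{u}_n$ and $\tilde{u}_m$ (for $m \geq n$) needs to lie in the \emph{larger} set $V_n$; attempting to use the minimality of $\tilde{u}_m$ symmetrically would require the midpoint to lie in $V_m$, which convexity does not directly give. Once this asymmetric application of the parallelogram identity is set up, the remaining steps are routine.
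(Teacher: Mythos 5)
Your proof is correct and follows essentially the same route as the paper's: both arguments exploit the minimality of $\tilde{u}_{n}$ on $V_{n}$ together with convexity to bound $\norm{\tilde{u}_{n}-\tilde{u}_{m}}_{\H}^{2}$ by a multiple of $\norm{\tilde{u}_{m}}_{\H}^{2}-\norm{\tilde{u}_{n}}_{\H}^{2}$, then conclude Cauchyness from the convergence of the monotone bounded norm sequence and membership in $\bigcap_{n}V_{n}$ from closedness. The only cosmetic difference is that you apply the parallelogram law at the midpoint ($\alpha=\tfrac12$), whereas the paper derives the variational inequality $\text{Re}\b{\tilde{u}_{n_{2}}-\tilde{u}_{n_{1}},\tilde{u}_{n_{1}}}_{\H}\ge0$ by letting $\alpha\to0$; both yield the same conclusion.
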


\begin{proof}
Note that $\tilde u_{n}$ is well-defined due to a standard result in functional
analysis (see Theorem 3.3-1 of Kreyszig's text \cite{kreys91}, for
example), since $V_{n}$ is non-empty, closed, and convex by assumption.
Furthermore, $\cb{\norm{\tilde{u}_{n}}_{\H}}_{n=1}^{\infty}$ is non-decreasing,
since $V_{n_{1}}\supseteq V_{n_{2}}$ whenever $n_{1}<n_{2}$. For
any convex combination $\b{1-\alpha}\tilde{u}_{n_{1}}+\alpha\tilde{u}_{n_{2}}$
such that $n_{1}<n_{2}$ and $\alpha\in\b{0,1}$,
\begin{align}
\norm{\tilde u_{n_{1}}}_{\H}^{2} & \le\norm{\alpha\b{\tilde{u}_{n_{2}}-\tilde{u}_{n_{1}}}+\tilde{u}_{n_{1}}}_{\H}^{2}\nonumber \\
 & =\norm{\tilde u_{n_{1}}}_{\H}^{2}+\alpha^{2}\norm{\tilde{u}_{n_{2}}-\tilde{u}_{n_{1}}}_{\H}^{2}+2\alpha\,\text{Re}\b{\tilde{u}_{n_{2}}-\tilde{u}_{n_{1}},\tilde{u}_{n_{1}}}_{\H}\nonumber \\
\implies0 & \le\alpha\norm{\tilde{u}_{n_{2}}-\tilde{u}_{n_{1}}}_{\H}^{2}+2\,\text{Re}\b{\tilde{u}_{n_{2}}-\tilde{u}_{n_{1}},\tilde{u}_{n_{1}}}_{\H}.\label{eq:zeroless}
\end{align}
Note that the first line above is because $\tilde{u}_{n_{2}}\in V_{n_{2}}\ss V_{n_{1}}$,
$\alpha\tilde{u}_{n_{1}}+\b{1-\alpha}\tilde{u}_{n_{2}}\in V_{n_{1}}$
by convexity, and $\tilde{u}_{n_{1}}=\text{argmin}\cb{\norm{\tilde{u}}_{\H}:\tilde{u}\in V_{n_{1}}}$.
Now, since (\ref{eq:zeroless}) holds for each $\alpha\in\b{0,1}$,
\begin{align}
\text{Re}\b{\tilde{u}_{n_{2}}-\tilde{u}_{n_{1}},\tilde{u}_{n_{1}}}_{\H} & \ge0\label{eq:sign}\\
\implies\text{Re}\b{\tilde{u}_{n_{2}},\tilde{u}_{n_{1}}}_{\H} & \ge\norm{\tilde{u}_{n_{1}}}_{\H}^{2}\label{eq:diff}
\end{align}
Then,
\begin{align}
\norm{\tilde{u}_{n_{2}}-\tilde{u}_{n_{1}}}_{\H}^{2} & =\text{Re}\b{\tilde{u}_{n_{2}},\tilde{u}_{n_{2}}-\tilde{u}_{n_{1}}}-\text{Re}\b{\tilde{u}_{n_{1}},\tilde{u}_{n_{2}}-\tilde{u}_{n_{1}}}\nonumber \\
 & \le\norm{\tilde{u}_{n_{2}}}_{\H}^{2}-\norm{\tilde{u}_{n_{1}}}_{\H}^{2}\text{, due to (\ref{eq:sign}) and }(\ref{eq:diff})\nonumber \\
 & \le2B\b{\norm{\tilde{u}_{n_{2}}}_{\H}-\norm{\tilde{u}_{n_{1}}}_{\H}}\text{, since \ensuremath{\norm{\tilde{u}_{n}}_{\H}\le B} for each \ensuremath{n}}.\label{eq:bound1}
\end{align}

Finally, $\cb{\norm{\tilde{u}_{n}}_{\H}}_{n=1}^{\infty}$ is bounded
and non-decreasing, and therefore converges, which implies $\norm{\tilde{u}_{n_{2}}-\tilde{u}_{n_{1}}}_{\H}\to0$
as $n_{1},n_{2}\to\infty$ due to (\ref{eq:bound1}). The sequence
$\cb{\tilde{u}_{n}}_{n=1}^{\infty}$ is therefore Cauchy and must
converge to some $\tilde{u}_{\infty}\in\H$, since $\H$ is a Hilbert
space. Furthermore, for each $m$, $\cb{\tilde{u}_{n}}_{n=m}^{\infty}\subset V_{m}$,
which is closed, so $\tilde{u}_{\infty}\in V_{m}$ as well. Therefore,
$\tilde{u}_{\infty}\in\bigcap_{n=1}^{\infty}V_{n}$. 
\end{proof}

We note that the constraint set for problem (\ref{eq:optproblem})
is closed as long as $\L$ is bounded, which allows us to apply Proposition
\ref{prop:Let--be}.  Also note the following corollary of Proposition
\ref{prop:Let--be}.
\begin{corollary}
\label{cor:Where--is}Where $\tilde{u}_{n}$ and $\tilde{u}_{\infty}$
are defined as in Proposition \ref{prop:Let--be} and the assumptions
of Proposition \ref{prop:Let--be} hold,
\[
\tilde{u}_{\infty}=\text{argmin}\cb{\norm{\tilde{u}}_{\H}:\tilde{u}\in\bigcap_{n=1}^{\infty}V_{n}}.
\]
\end{corollary}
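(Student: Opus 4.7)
The plan is to leverage what Proposition~\ref{prop:Let--be} already delivers and then apply the defining property of each $\tilde{u}_n$ as a constrained minimizer. From the proposition we already know $\tilde{u}_n \to \tilde{u}_\infty$ in $\H$, that $\tilde{u}_\infty \in \bigcap_{n=1}^\infty V_n$, and that $\{\|\tilde{u}_n\|_{\H}\}$ is non-decreasing and bounded, so by norm continuity $\|\tilde{u}_n\|_{\H} \to \|\tilde{u}_\infty\|_{\H}$.

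First I would verify that $C := \bigcap_{n=1}^\infty V_n$ is a non-empty, closed, convex subset of $\H$: non-empty because it contains $\tilde{u}_\infty$, closed and convex as an intersection of closed convex sets. This ensures that the minimization problem on the right-hand side of the claimed equality is itself well-posed (a unique minimizer exists by Theorem~3.3-1 of~\cite{kreys91}), so the statement “$\tilde{u}_\infty = \mathrm{argmin}\{\,\ldots\,\}$” makes sense as an equality of elements rather than sets.

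Next I would pick an arbitrary $v \in C$. Since $v \in V_n$ for every $n$, the defining property of $\tilde{u}_n$ as the minimum-norm element of $V_n$ gives $\|\tilde{u}_n\|_{\H} \le \|v\|_{\H}$ for each $n$. Letting $n \to \infty$ and using $\|\tilde{u}_n\|_{\H} \to \|\tilde{u}_\infty\|_{\H}$, I obtain $\|\tilde{u}_\infty\|_{\H} \le \|v\|_{\H}$. Combined with $\tilde{u}_\infty \in C$, this shows $\tilde{u}_\infty$ is a minimizer of $\|\cdot\|_{\H}$ over $C$, and the uniqueness of the minimum-norm element in a closed convex subset of a Hilbert space then identifies $\tilde{u}_\infty$ with the argmin.

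There is essentially no obstacle here; the result is a one-line consequence of Proposition~\ref{prop:Let--be} together with the nested, shrinking structure of the $V_n$. The only point that requires a moment of care is justifying that the norms pass to the limit correctly, which is immediate from strong convergence in $\H$, and noting that the standard Hilbert space projection theorem applies to $C$ to deliver uniqueness of the argmin.
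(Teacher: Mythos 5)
Your proof is correct and follows essentially the same route as the paper's: both use the norm convergence $\norm{\tilde{u}_{n}}_{\H}\to\norm{\tilde{u}_{\infty}}_{\H}$, the bound $\norm{\tilde{u}_{n}}_{\H}\le\norm{v}_{\H}$ for any $v$ in the intersection, and uniqueness of the minimum-norm element of a closed convex set. The only cosmetic difference is that the paper compares directly against $\tilde{u}_{\text{min}}$ rather than an arbitrary $v\in\bigcap_{n}V_{n}$.
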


\begin{proof}
Let $\tilde{u}_{\text{min}}=\text{argmin}\cb{\norm{\tilde{u}}_{\H}:\tilde{u}\in\bigcap_{n=1}^{\infty}V_{n}}$;
note $\bigcap_{n=1}^{\infty}V_{n}$ is non-empty since we already
know $\tilde{u}_{\infty}$ exists and is in $\bigcap_{n=1}^{\infty}V_{n}$.
Then, $\norm{\tilde{u}_{n}}_{\H}\to\norm{\tilde{u}_{\infty}}_{\H}$
and $\norm{\tilde{u}_{\infty}}_{\H}\ge\norm{\tilde{u}_{\text{min}}}_{\H}\ge\norm{\tilde{u}_{n}}_{\H}$
for each $n$, so $\norm{\tilde{u}_{\infty}}_{\H}=\norm{\tilde{u}_{\text{min}}}_{\H}$.
Finally, $\tilde{u}_{\infty}=\tilde{u}_{\text{min}}$ because $\bigcap_{n=1}^{\infty}V_{n}$
is convex and therefore its minimum-norm element is unique.
\end{proof}
For our optimization problem applied to PDEs, assume that $\{\x_{j}\}_{j=1}^{\infty}$
is dense in $S$ and that $\{\y_{j}\}_{j=1}^{\infty}$ is dense in
$\partial S$. Then, Corollary \ref{cor:Where--is} implies that if
our solution $\tilde{u}_{\tilde{N},\tilde{N}_{\partial}}$ to the
optimization problem (\ref{eq:optproblem}) remains bounded in $\H$
as $\tilde{N},\tilde{N}_{\partial}\to\infty$, there is at least one
solution to the PDE on a dense set of points in $S$ and $\partial S$.
If $\F u,\G u$ are continuous for functions $u\in\H$, there is a
solution to the PDE on all of $S$ and $\partial S$, and our solutions
to (\ref{eq:optproblem}) converge to $\tilde{u}_{\infty}$ where
\[
\tilde{u}_{\infty}=\text{argmin}\cb{\norm{\tilde{u}}_{\H}:\F\tilde{u}\eval_{S}=f\text{ and }\G\tilde{u}\eval_{\partial S}=g}.
\]
That is, $\tilde{u}_{\tilde{N},\tilde{N}_{\partial}}$ converges to
the norm-minimizing solution to the PDE given by (\ref{eq:pde1})
and (\ref{eq:pdebound}). Also note that if at least one strong-form
solution to the PDE of (\ref{eq:pde1}) and (\ref{eq:pdebound}) exists
and can be extended to a function $u\in\H$, then $u$ is feasible
for each (\ref{eq:optproblem}) and $\Vert\tilde{u}_{\tilde{N},\tilde{N}_{\partial}}\Vert_{\H}\le\norm u_{\H}$
for each $n$, so Proposition \ref{prop:Let--be} and Corollary \ref{cor:Where--is}
apply; $\tilde{u}_{\tilde{N},\tilde{N}_{\partial}}\to\tilde{u}_{\infty}$,
where $\tilde{u}_{\infty}$ also solves the PDE of (\ref{eq:pde1})
and (\ref{eq:pdebound}).

\subsection{High-Order Bounds on Functions with Scattered Zeros\label{subsec:High-Order-Bounds-on}}

Before discussing convergence rates, we must recall a couple of definitions. 
\begin{definition}
The fill distance $h_{\text{max}}$ of a set of points $\cb{\x_{j}}_{j=1}^{\tilde{N}}$
in a domain $S$ is given by
\[
h_{\text{max}}=\sup_{\x\in S}\min_{j\in\cb{1,2,\ldots,\tilde{N}}}\norm{\x-\x_{j}}_{2}.
\]
\end{definition}

A beneficial aspect of meshfree methods is that they typically converge
in a high-order manner with respect to the fill distance. Various
RBF interpolation schemes are known to converge super-algebraically
when used to approximate functions both in and out of the native Hilbert
space $\H$ on certain types of domains (see, for example, Subsection
3.2 of \cite{narco05} for functions outside the native space). A
common requirement is that the domain satisfies an interior cone condition,
which we now define.
\begin{definition}
(See Def. 3.6 from \cite{wendl04}) A domain $U\subset\R^{m}$ satisfies
an interior cone condition if there exists an angle $\theta\in(0,\frac{\pi}{2})$
and a radius $r>0$ such that for all $\x\in U$, there exists a unit
vector $\vc{\xi}\b{\x}$ such that
\[
\cb{\x+\lambda\y:\y\in\R^{m},\norm{\y}_{2}=1,\y\cdot\vc{\xi}\b{\x}\ge\cos\theta,\lambda\in\sb{0,r}}\subset U.
\]
\end{definition}

Bounded Lipschitz domains are an example of domains that satisfy an
interior cone condition. One path towards efficiently proving convergence
of certain meshfree interpolation schemes is to note that functions
with scattered zeros satisfy high-order bounds. This can be proven
directly, as in \cite{narco05}, or using existing results from other
interpolation schemes, which is the path we have previously taken
in \cite{venn} using results from \cite{wendl04}, summarized in
the next result.
\begin{theorem}
\label{thm:(Adapted-from-Corollary}(Adapted from Corollary 6 of \cite{venn})
Let $f\in H^{\frac{m}{2}+q+\frac{1}{2}}\, (U)$ where $U\subset\R^{m}$
is bounded and satisfies an interior cone condition, and $m\ge3$
if $q=0$. Assume $f$ is exactly zero on $\{x_{k}\}_{k=1}^{\tilde{N}}$
with fill distance $h_{\text{max}}$ on $U$, then for all $p\le q$,
there exist constants $C_{m,p},h_{0,m,p}>0$ such that as long as
$h_{\text{max}}<h_{0,m,p}$,
\[
\norm f_{L^{\infty}\b U}\le C_{m,p}h_{\text{max}}^{p+\frac{1}{2}}\norm f_{H^{\frac{m}{2}+q+\frac{1}{2}}\b U}.
\]
\end{theorem}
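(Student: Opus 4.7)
The plan is to reduce the bound to a standard sampling inequality for functions with scattered zeros on a domain satisfying an interior cone condition; such inequalities are the workhorse behind RBF interpolation error estimates and are already assembled in \cite{wendl04} and the proof of Corollary 6 of \cite{venn}, so the present statement should follow as a short specialization.

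The key ingredient I would invoke is the Narcowich--Ward--Wendland zero lemma in the following form: if $U\subset\R^m$ is bounded and satisfies an interior cone condition, $\tau > m/2$, and $f\in H^{\tau}(U)$ vanishes on a point set of fill distance $h_{\max}$, then there exist constants $C,h_0>0$ (independent of $f$) such that for $h_{\max}<h_0$,
\[
\|f\|_{L^{\infty}(U)} \le C\, h_{\max}^{\tau - m/2}\, \|f\|_{H^{\tau}(U)}.
\]
This is packaged, for example, as Proposition 3.6 of \cite{wendl04} and is applied in the proof of Corollary 6 of \cite{venn}. The requirement $\tau>m/2$ ensures that pointwise values are well defined through Sobolev embedding, which also explains the hypothesis $m\ge 3$ when $q=0$: it places $\tau=m/2+1/2$ strictly above the embedding threshold used inside the zero-lemma proof (via a local polynomial reproduction/Markov estimate on a Whitney-type cover).

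Specializing the lemma to $\tau = m/2+q+1/2$ gives exponent $\tau-m/2 = q+1/2$. For any $p\le q$ and $h_{\max}\le 1$ one has $h_{\max}^{q+1/2}\le h_{\max}^{p+1/2}$, so by shrinking $h_0$ if necessary to force $h_{\max}\le 1$, absorbing the ratio into the constant, and renaming to $C_{m,p}$ and $h_{0,m,p}$, the stated bound follows for every $p\le q$.

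The only nontrivial step is the zero lemma itself; its proof uses the interior cone condition together with local polynomial reproduction to lift pointwise vanishing at $\{x_k\}$ into a global $L^{\infty}$ control by the Sobolev norm. Since that inequality is cited rather than re-derived, the main obstacle is simply verifying that the hypotheses line up (boundedness of $U$, the cone condition, and $\tau=m/2+q+1/2>m/2$), after which the result reduces to the elementary monotonicity in $h_{\max}$ described above. An equivalent and, for the paper's themes, perhaps more suggestive way to phrase the same argument is to view $f$ as the error of the zero function interpreted as the minimum-native-norm interpolant of $f$ at $\{x_k\}$ in a Matérn/Wendland native space norm-equivalent to $H^{m/2+q+1/2}(U)$; but either framing leads to exactly the same estimate.
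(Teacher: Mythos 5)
Your argument is essentially the paper's own: the theorem is stated without proof here precisely because it is the Narcowich--Ward--Wendland zeros/sampling inequality (as assembled in \cite{wendl04}, \cite{narco05}, and Corollary 6 of \cite{venn}) applied with $\tau=\frac{m}{2}+q+\frac{1}{2}$ to give the exponent $q+\frac{1}{2}$, followed by the monotonicity step $h_{\text{max}}^{q+\frac{1}{2}}\le h_{\text{max}}^{p+\frac{1}{2}}$ for $p\le q$ once $h_{0,m,p}\le1$. Two cosmetic corrections: the result you want from \cite{wendl04} is its sampling/zeros estimate rather than Definition 3.6 (which is the interior cone condition), and the hypothesis $m\ge3$ when $q=0$ reflects the zeros lemma's stronger smoothness requirement (the integer part of $\tau$ must exceed $\frac{m}{2}$), not the bare condition $\tau>\frac{m}{2}$, which holds for every $m$.
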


On compact manifolds $S$ with dimension $m$, the same theorem holds,
with the sufficient conditions that $S$ can be parametrized by a
finite atlas $\cb{\sigma_{k}}_{k=1}^{M}$ with $0<M<\infty$ such
that $\sigma_{k}:U_{k}\to S$ is $C^{\left\lceil \frac{m}{2}+q+\frac{1}{2}\right\rceil }$
with bounded derivatives and has a bounded inverse tensor, and each
$U_{k}$ is bounded and satisfies an interior cone condition. The
theorem may then be applied to $f\circ\sigma_{k}$ on $U_{k}$. The
utility of Theorem \ref{thm:(Adapted-from-Corollary} is that if our
Hilbert space $\H$ is such that $\Vert\F v\Vert_{H^{\frac{m}{2}+q+\frac{1}{2}}\b S}\le A\norm v_{\H}$
and $\Vert\G v\Vert_{H^{\frac{m}{2}+q+\frac{1}{2}}\b{\partial S}}\le B\norm v_{\H}$
for each $v\in\H$ and some constants $A,B$, then we can immediately
state estimates for $\Vert\F\tilde{u}-f\Vert_{L^{\infty}\b S}$ and $\norm{\G\tilde{u}-g}_{L^{\infty}\b{\partial S}}$
where $\tilde{u}$ is the solution to (\ref{eq:optprob}). 

In particular,
if any strong-form solution $u$ to the PDE given by (\ref{eq:pde1}) and (\ref{eq:pdebound})
exists and can be extended to $\H$, and $\norm{\F v}_{H^{\frac{m}{2}+q+\frac{1}{2}}\b S}\le A\norm v_{\H}$
for each $v\in\H$, then
\begin{align*}
\norm{\F\tilde{u}-f}_{H^{\frac{m}{2}+q+\frac{1}{2}}\b S} & =\norm{\F\b{\tilde{u}-u}}_{H^{\frac{m}{2}+q+\frac{1}{2}}\b S}\le A\norm{\tilde{u}-u}_{\H}.
\end{align*}
Now, if such a strong-form solution $u$ exists, then solutions to (\ref{eq:optprob}) are bounded, $\tilde u_\infty$ exists by Proposition \ref{prop:Let--be}, and $\tilde{u}\to\tilde u_\infty$. If the sets of points $\cb{\x_{j}}_{j=1}^{\infty}$ and $\cb{\y_{j}}_{j=1}^{\infty}$ are dense on $S$ and $\partial S$ and functions in $\H$ are sufficiently smooth, then $\tilde u_\infty$ restricted to $S$ is also a strong form solution and 
\begin{align*}
\norm{\F\tilde{u}-f}_{H^{\frac{m}{2}+q+\frac{1}{2}}\b S} &\le A\norm{\tilde{u}-\tilde u_\infty}_{\H}\to 0.
\end{align*}
Therefore, if we invoke Theorem \ref{thm:(Adapted-from-Corollary} (assuming
$S$ satisfies sufficient conditions), noting that $\F\tilde{u}-f$
has scattered zeros, we have, for $p\le q$,
\[
\norm{\F\tilde{u}-f}_{L^{\infty}\b S}=o\b{h_{\text{max}}^{p+\frac{1}{2}}}.
\]
There is then a similar statement for $\norm{\G\tilde{u}-g}_{L^{\infty}\b{\partial S}}$.
Whether or not a solution $u\in\H$ to the PDE (\ref{eq:pde1}) and (\ref{eq:pdebound})
exists, Theorem \ref{thm:(Adapted-from-Corollary} still implies
\begin{align*}
\norm{\F\tilde{u}-f}_{L^{\infty}\b S} & \le C_{m,p}h_{\text{max}}^{p+\frac{1}{2}}\norm{\F\tilde{u}-f}_{H^{\frac{m}{2}+q+\frac{1}{2}}\b S}\\
 & \le C_{m,p}h_{\text{max}}^{p+\frac{1}{2}}\b{A\norm{\tilde{u}}_{\H}+\norm f_{H^{\frac{m}{2}+q+\frac{1}{2}}\b S}},
\end{align*}
with a similar expression for $\G\tilde{u}-g$ again. Leaving $f,g$,
and $S$ fixed (or $f,g$ varying, but bounded), but varying $h_{\text{max}}$,
$\Vert\tilde{u}\Vert_{\H}$ either converges to a non-zero value, assuming
$f$ or $g$ are non-zero, or diverges to infinity due to Proposition
\ref{prop:Let--be}. With this in mind, we will often assume for simplicity
that the method given by (\ref{eq:optprob}) is $p^{\text{th}}$-order
in the sense that, 
\begin{align}
\norm{\F\tilde{u}-f}_{L^{\infty}\b S} & \le A_{p}h_{\text{max}}^{p}\norm{\tilde{u}}_{\H},\label{eq:boundsorder}\\
\norm{\G\tilde{u}-g}_{L^{\infty}\b{{\partial S}}} & \le B_{p}h_{\text{max}}^{p}\norm{\tilde{u}}_{\H}.\nonumber 
\end{align}
for some constants $A_{p},B_{p}>0$. Note that this simplification just relies on the fact that $\norm{\tilde u}_\H$ is certainly bounded below by a positive constant as long as the right-hand sides of the constraints from (\ref{eq:optprob}) are not identically zero, so some constant multiple of $\norm{\tilde u}_\H$ is greater than $\Vert f\Vert_{H^{\frac{m}{2}+q+\frac{1}{2}}\b S}$. We reiterate that $f$ here is either fixed or only allowed to vary boundedly.

We again note that (\ref{eq:boundsorder})
holds regardless of whether the PDE given by (\ref{eq:pde1}) and
(\ref{eq:pdebound}) is even solvable in $\H$; if it is, then $\norm{\tilde{u}}_{\H}$
is bounded and $\norm{\F\tilde{u}-f}_{L^{\infty}\b S},$ $\norm{\G\tilde{u}-g}_{L^{\infty}\b{{\partial S}}}\to0$.
If it is not solvable, $\norm{\tilde{u}}_{\H}$ will diverge, potentially
faster than $h_{\text{max}}^{-p}$, so $\norm{\F\tilde{u}-f}_{L^{\infty}\b S}$
and $\norm{\G\tilde{u}-g}_{L^{\infty}\b{{\partial S}}}$
could diverge.

\subsection{Hilbert Spaces for Meshfree Methods\label{subsec:Hilbert-Spaces-for}}

Recall that $\L$ must be bounded as an operator from $\H$ to $\C^{\tilde{N}+\tilde{N}_{\partial}}$
to guarantee convergence via Proposition \ref{prop:Let--be}. A straightforward
approach to construct a suitable Hilbert space is to use Fourier transforms
or series.
\begin{example}
\label{exa:Consider-the-Hilbert}Consider the Hilbert space of functions
on $\R$ given by
\[
\H:=\cb{u:u\b x=\int_{-\infty}^{\infty}e^{-\abs{\omega}+i\omega x}a\b{\omega}\,\d\omega,a\in L^{2}\b{\R}},
\]
such that if $u\b x=\int_{-\infty}^{\infty}e^{-\abs{\omega}+i\omega x}a\b{\omega}\,\d\omega$
and $v\b x=\int_{-\infty}^{\infty}e^{-\abs{\omega}+i\omega x}b\b{\omega}\,\d\omega$,
then the inner product on $\H$ is given by $\b{u,v}_{\H}:=\b{a,b}_{L^{2}\b{\R}}.$
Now consider the interpolation problem on $\{x_{j}\}_{j=1}^{\tilde{N}}\subset S\subset\R$.
\end{example}

\begin{align}
\text{minimize, over }u\in\H: & \norm u_{\H},\label{eq:rbfprob}\\
\text{subject to } & u\b{x_{j}}=f_{j}\text{ for each \ensuremath{j}\ensuremath{\in\cb{1,2,\ldots,\tilde{N}},}}\nonumber 
\end{align}
for some $\vc f\in\C^{\tilde{N}}$. Define $\L:\H\to\C^{\tilde{N}}$
by $\b{\L u}_{j}:=u\b{x_{j}}$. Then, 
\begin{align*}
\b{\L^{*}\vc{\beta}}\b x & =\sum_{j=1}^{\tilde{N}}\beta_{j}\int_{-\infty}^{\infty}e^{-2\abs{\omega}+i\omega\b{x-x_{j}}}\,\d\omega=\sum_{j=1}^{\tilde{N}}\beta_{j}\frac{4}{4+\b{x-x_{j}}^{2}}.
\end{align*}
That is, $\L^{*}\hat{\vc e}_{j}$ corresponds to the RBF $\phi$ such
that $\phi\,(x-x_{j}):=4/(4+(x-x_{j})^{2})$: an inverse quadratic
RBF. Indeed, the RBF interpolation matrix $\vc{\Phi}$ for this problem
is given by $\Phi_{jk}=4/(4+(x_{j}-x_{k})^{2})=(\L\L^{*})_{jk}.$

\begin{example}
\label{exa:Consider-placing-a}Consider placing a domain $S$ inside
a box $\Omega\subset\R^{m}$. We consider the set of periodic Fourier
basis functions on $\Omega$: $\cb{\phi_{n}}_{n=1}^{\infty}$ where
$\phi_{n}\b{\x}=e^{i\vc{\omega}_{n}\cdot\x}$ for frequencies $\vc{\omega}_{n}$
such that $\phi_{n}$ is periodic on $\Omega$. Then, define a Hilbert
space $\H$ by
\begin{equation}
\H:=\cb{\sum_{n=1}^{\infty}a_{n}d_{n}^{-\frac{1}{2}}\phi_{n}:a\in\ell^{2}},\label{eq:seriesspace}
\end{equation}
where $d=\cb{d_n}_{n=1}^\infty>0$ is a sequence that must be chosen. In the case that
$\F$ and $\G$ from Subsection \ref{subsec:Norm-Minimizing-Hermite-Birkhoff}
are linear differential operators of at most order $p$ with bounded
coefficient functions, the condition for $\L$ to be bounded (and
therefore the method to converge) is that $\norm{\vc{\omega}}_{2}^{p}d^{-\frac{1}{2}}:=\{\norm{\vc{\omega}_n}_{2}^{p}d_n^{-\frac{1}{2}}\}_{n=1}^\infty\in\ell^{2}$,
where $\{\vc{\omega}\}_{2}:=\{\Vert\vc{\omega}_{n}\Vert_{2}\}_{n=1}^{\infty}$
is a sequence of Euclidean norms of the Fourier frequencies (see \cite{venn}
for further discussion). Another possible construction of this Hilbert
space is
\[
\H:=\cb{\sum_{n=1}^{\infty}\hat{u}_{n}\phi_{n}:\sum_{n=1}^{\infty}d_{n}\abs{\hat{u}_{n}}^{2}<\infty}.
\]
In this construction, it is clear that choosing $d_{n}=(1+\Vert\vc{\omega}_{n}\Vert_{2}^{2})^{p}$
corresponds to minimum Sobolev norm interpolation (see \cite{chand18},
for example). The inner product is
\[
\b{\sum_{n=1}^{\infty}a_{n}d_{n}^{-\frac{1}{2}}\phi_{n},\sum_{n=1}^{\infty}b_{n}d_{n}^{-\frac{1}{2}}\phi_{n}}_{\H}:=\b{a,b}_{\ell^{2}}.%
\]

In dimensions greater than one, it is useful to make $d$ separable;
for example, in 2D, we could choose $d_{n}:=d_{x,n_{x}}d_{y,n_{y}}$,
where $\vc{\omega}_{n}=\omega_{x,n_{x}}\hat{\vc x}+\omega_{y,n_{y}}\hat{\vc y}$.
Then, in the case $\F,\G$ are identities (function interpolation),
it can be shown that
\begin{align}
\L^{*}\hat{\vc e}_{j} & =\sum_{n=1}^{\infty}d_{n}^{-1}e^{i\vc{\omega}_{n}\cdot\b{\x-\x_{j}}}=\b{\sum_{n_{x}=1}^{\infty}d_{x,n_{x}}^{-1}e^{i\omega_{x,n_{x}}\b{x-x_{j}}}}\b{\sum_{n_{y}=1}^{\infty}d_{y,n_{y}}^{-1}e^{i\omega_{y,n_{y}}\b{y-y_{j}}}}.\label{eq:sep}
\end{align}
This is useful because, in practice, we will always be using a truncated
Fourier series. When $d_{n}$ is separable, we can evaluate $(\L^{*}\hat{\vc e}_{j})\b{x,y}$
with $N_{\omega}=N_{\omega,x}N_{\omega,y}$ Fourier basis functions
in $\O\b{N_{\omega,x}+N_{\omega,y}}$ operations, rather than $\O\b{N_{\omega,x}N_{\omega,y}}$.
This means we can typically form the $\vc{\Phi}=\vc{VV}^*$ matrix in $\O \, (N_\omega^{1/m}(\tilde{N}+\tilde{N}_\partial)^2)$ time for a series in $m$ dimensions rather than the $\O \, (N_\omega(\tilde{N}+\tilde{N}_\partial)^2)$ of computing $\vc{VV}^*$ directly. However, the
$\vc{\Phi}$ matrix is poorly conditioned, so this approach is typically
limited to 5 or 6 digits of accuracy.

After truncating the Fourier series, the optimization problem becomes
\begin{align}
\text{minimize: } & \norm{\vc a}_{2},\label{eq:undfour}\\
\text{subject to } & \vc{Va}=\vc f,\nonumber 
\end{align}
where $\vc V$ is the matrix such that $\vc{Va}\in\C^{\tilde{N}+\tilde{N}_{\partial}}$
is a vector consisting of the values of $\F\,(\sum_{n=1}^{N_{\omega}}a_{n}d_{n}^{-1/2}\phi_{n})$
and $\G\,(\sum_{n=1}^{N_{\omega}}a_{n}d_{n}^{-1/2}\phi_{n})$
on $\{\x_{j}\}_{j=1}^{\tilde{N}}$ and $\{\vc y_{j}\}_{j=1}^{\tilde{N}_{\partial}}$,
respectively. In this case, the matrix $\vc{\Phi}$ for RBF methods
corresponds to $\vc{VV}^{*}$; one could solve $\vc{VV}^{*}\vc{\beta}=\vc f$,
then the solution to (\ref{eq:undfour}) is given by $\vc a=\vc V^{*}\vc{\beta}$.
As stated previously, this approach is poorly conditioned, however,
and superior results can generally be obtained from solving (\ref{eq:undfour})
using a singular value decomposition or complete orthogonal decomposition.
That said, if speed while using a large number of Fourier basis functions
is the main concern, forming $\vc{VV}^{*}=\vc\Phi$ using a separable $d$
as described around Eq. (\ref{eq:sep}) will be less computationally
intensive than solving (\ref{eq:undfour}) directly. Solving the positive definite system $\vc\Phi\vc\beta=\vc f$ requires $\O\,((\tilde{N}+\tilde{N}_\partial)^3)$ operations using a Cholesky decomposition or another direct algorithm, while the cost of forming a decomposition of $\vc V$ is $\O\,(N_\omega(\tilde{N}+\tilde{N}_\partial)^2)$; $N_\omega$ may be significantly larger than $\tilde{N}+\tilde{N}_\partial$ to ensure $\vc V$ is full rank. See \cite{venn} for further discussion on using these Hilbert spaces for solving surface PDEs. %

\end{example}

\section{Methods and Analysis\label{sec:Methods-and-Analysis}}

We now present two methods for estimating $\int_{S}f$ on irregular
flat domains and surfaces. Throughout this section, we assume that
$\H$ is a Hilbert space associated with a particular order of convergence
for solving a Poisson problem using a constrained optimization approach
(\ref{eq:optprob}). Specifically, we assume that there exist constants
$p,A_{p},B_{p}>0$ such that if $u\in\H$ and $\Delta_{S}u-f$ has
scattered zeros on a set of points $\{\x_{j}\}_{j=1}^{\tilde{N}}\subset S$
with fill distance $h_{\text{max}}$, then $\Vert\Delta_{S}u-f\Vert_{L^{\infty}\b S}\le A_{p}h_{\text{max}}^{p}\norm u_{\H}$
for a sufficiently smooth $f$ and small enough $h_{\text{max}}$.
Furthermore, if $\hat{\vc n}_{\partial S}\cdot\nabla_{S}u=0$ on a
set of scattered points $\{\vc y_{j}\}_{j=1}^{\tilde{N}}\subset\partial S$
with fill distance $h_{\text{max}}$, then $\norm{\hat{\vc n}_{\partial S}\cdot\nabla_{S}u}_{L^{\infty}\b{\partial S}}\le B_{p}h_{\text{max}}^{p}\norm u_{\H}$.
See the discussion leading up to (\ref{eq:boundsorder}) for an explanation
of these assumptions.

\subsection{Method 1: Poisson Solvability\label{subsec:Method-=0000231:-Poisson}}

We rely on Proposition \ref{prop:Let--be} to construct our first
method. Let $S\subset\Omega$ be the domain of integration, which
may be flat or a surface, and let $g$ be a real function so that
$\int_{S}g>0$ is known or can be easily estimated (for example, a
function with compact support contained in a flat domain $S$). Assume
we want to know $\int_{S}f$, where $f$ is a real integrable function
on $S$. Then consider the problem
\begin{align} 
\Delta_{S}u & =f-cg,\text{\ensuremath{\quad}}\hat{\vc n}_{\partial S}\cdot\nabla_{S}u\eval_{\partial S}=0,\label{eq:areapde}
\end{align}
where $\hat{\vc n}_{\partial S}$ is the conormal vector along $\partial S$, so that it is tangential to $S$ and normal to $\partial S$, and $c$ is some constant.
If $S$ is a closed surface, the boundary condition is simply removed. The problem (\ref{eq:areapde}) is only solvable when $\int_{S}\b{f-cg}=0$, or
equivalently, when
\begin{equation}
c=c_{*}:=\frac{\int_{S}f}{\int_{S}g}.\label{eq:condforc}
\end{equation}

We can then discretize problem (\ref{eq:areapde}). Let $\H$ be a
Hilbert space satisfying the assumptions at the beginning of this
section. Also assume that $f$, $g$, and $S$ are smooth enough, and $\partial S$ is such that a 
solution to ($\ref{eq:areapde}$) for the value of $c$ given
by (\ref{eq:condforc}) can be extended to some $u_{c_{*}}\in\H$. For example, in flat domains, for $f,g\in H^{p}(S)$ and a $C^{p+2}$ boundary $\partial S$, an extended solution can be found for $\H=H^{p+2}(\Omega)$ (see Ch. 4, Section 2, Thm. 4 of \cite{mikha78} for PDE regularity, and Thm. A.4 of \cite{mclea00} for the extension).
Let $\{\x_{j}\}_{j=1}^{\infty}$ be dense in $S$, and let $\{\vc y_{j}\}_{j=1}^{\infty}$
be dense in $\partial S$. Then consider the problem:
\begin{align}
\text{minimize, over }u\in\H: & \norm u_{\H},\label{eq:areadisc}\\
\text{subject to } & \Delta_{S}u\b{\x_{j}}=f\b{\x_{j}}-cg\b{\x_{j}}\text{ for each \ensuremath{j\in\cb{1,2,\ldots,\tilde{N}}}},\nonumber \\
 & \hat{\vc n}_{\partial S}\cdot\nabla_{S}u\b{\vc y_{j}}=0\text{, for each }j\in\cb{1,2,\ldots\tilde{N}_{\partial}}.\nonumber 
\end{align}
 See Subsection \ref{subsec:Solving-Surface-Poisson} for the specifics
of evaluating $\Delta_{S}u$ on surfaces. We set this up so that the
minimizing solution is $\tilde{u}=\L^{*}\vc{\beta}$, where $\L^{*}$
is as in Subsection \ref{subsec:Norm-Minimizing-Hermite-Birkhoff}
and $\vc{\Phi}:=\L\L^{*}$. Then, we can write:
\begin{align*}
\vc{\beta} & =\vc{\Phi}^{-1}\b{\vc f-c\vc g}\text{ and }\norm{\tilde{u}}_{\H}^{2}=\b{\vc f-c\vc g}^{*}\vc{\Phi}^{-1}\b{\vc f-c\vc g}.
\end{align*}
Now, this will only be bounded as $\tilde{N},\tilde{N}_{\partial}\to\infty$
when $\int_{S}\b{f-cg}=0$ using Proposition \ref{prop:Let--be}. Noticing that the expression for $\norm{\tilde u}_\H^2$ is simply an upward parabola in $c$, 
then, as $\tilde{N},\tilde{N}_{\partial}\to\infty$, the minimizer of
$\Vert\tilde u\Vert_\H^2=(\vc f-c\vc g)^{*}\vc{\Phi}^{-1}(\vc f-c\vc g)$ as a function
of $c$ must approach the value of $c$ where $\int_{S}\,(f-cg)=0$; call this value $c_*:=\int_Sf/\int_Sg$. More explicitly, $\Vert\tilde u\Vert_\H^2$ is unbounded as $\tilde{N},\tilde{N}_{\partial}\to\infty$ for $c=c_*\pm\lc$ for any $\lc>0$, but bounded at $c_*$, so the vertex of $\Vert\tilde u\Vert_\H^2$ as a function of $c$ must converge to $c_*$.
This minimum (assuming $\vc{\Phi}$ is real) is at
\begin{equation}
\tilde{c}_{\tilde{N}}:=\frac{\vc g^{*}\vc{\Phi}^{-1}\vc f}{\vc g^{*}\vc{\Phi}^{-1}\vc g}\to c_*,\label{eq:esti}
\end{equation}
where we note that $\vc{\Phi}^{-1}$ is symmetric. Similar expressions to (\ref{eq:esti}) appear in statistics literature when $\H$ is a Hilbert space of functions with zero average on $S$, such as Lemma 3 of \cite{oates16} and Section 3.2 of \cite{barp22}. We wish to work on more general surfaces where an exact basis for a space of functions that integrate to zero is not known, and therefore we use a basis of functions on a larger domain $\Omega$ instead (typically a box or all of $\R^m$). This means that a different approach was required to arrive at (\ref{eq:esti}).

From (\ref{eq:esti}), we conclude that as $\tilde{N},\tilde{N}_{\partial}\to\infty$,

\begin{equation}
\int_{S}g\frac{\vc{\vc g}^{*}\vc{\Phi}^{-1}}{\vc g^{*}\vc{\Phi}^{-1}\vc g}\vc f\to\int_{S}f.\label{eq:inteconv}
\end{equation}
So, we have a linear operator to estimate $\int_{S}f$, with real
weights as long as $\vc{\Phi}^{-1}$ is real (which is typical for
a reasonable choice of $\H$). To be specific, let $\vc w^{*}=\int_{S}g\,\vc{\vc g}^{*}\vc{\Phi}^{-1}/(\vc g^{*}\vc{\Phi}^{-1}\vc g)$,
then $\vc w^{*}\vc f\to\int_{S}f$; $\vc w^{*}$ gives integration
weights.

We may also want similar expressions to (\ref{eq:esti}) and (\ref{eq:inteconv})
in terms of inner products in $\H$. Let $v_{g}^{(\tilde{N})}$
and $v_{f}^{(\tilde{N})}$ be the solutions to 
\begin{align*}
\text{minimize, over }u\in\H: & \norm u_{\H}\\
\text{subject to } & \Delta_{S}u\b{\x_{j}}=F\b{\x_{j}}\text{ for each \ensuremath{j\in\cb{1,2,\ldots,\tilde{N}}}}\\
 & \hat{\vc n}_{\partial S}\cdot\nabla_{S}u\b{\vc y_{j}}=0\text{, for each }j\in\cb{1,2,\ldots\tilde{N}_{\partial}},
\end{align*}
where $F=g$ and $F=f$ to produce $v_{g}^{(\tilde{N})}$ and $v_{f}^{(\tilde{N})}$,
respectively. These may correspond to problems on the domain $S$
that are not actually solvable, but the discretized version will still
admit a unique solution. Then, as $\tilde{N},\tilde{N}_{\partial}\to\infty$,
\[
\tilde{c}_{\tilde{N}}=\frac{\b{v_{g}^{\b{\tilde{N}}},v_{f}^{\b{\tilde{N}}}}_{\H}}{\norm{v_{g}^{\b{\tilde{N}}}}_{\H}^{2}},\quad\int_{S}g\frac{\b{v_{g}^{\b{\tilde{N}}},v_{f}^{\b{\tilde{N}}}}_{\H}}{\norm{v_{g}^{\b{\tilde{N}}}}_{\H}^{2}}\to\int_{S}f.
\]

\subsubsection{Analysis\label{subsec:Analysis}}

We now study the convergence rate of this method; in particular, we
show that it inherits the convergence rate of the underlying meshfree
PDE method.

\begin{theorem}\label{thm:method1thm}
Assume $u_c^{(\tilde N)}$ is the solution to (\ref{eq:areadisc}) for some values of $c$ and $\tilde{N}$. Assume that $f-cg$ is sufficiently smooth and $c$ is in any bounded interval, and, for some $p>0$, $\H$ produces a $p^{\text{th}}$-order meshfree method for problem (\ref{eq:areadisc}) in the sense of (\ref{eq:boundsorder}):
\begin{align}
\abs{\Delta_{S}u_{c}^{\b{\tilde{N}}}-\b{f-cg}} & \le A_{p}h_{\text{max}}^{p}\norm{u_{c}^{\b{\tilde{N}}}}_{\H}\text{ on }S,\label{eq:b1}\\
\abs{\hat{\vc n}_{\partial S}\cdot\nabla_{S}u_{c}^{\b{\tilde{N}}}} & \le B_{p}h_{\text{max}}^{p}\norm{u_{c}^{\b{\tilde{N}}}}_{\H}\text{ on }\partial S.\label{eq:b2}
\end{align}
If an exact, extended, strong-form solution $u_{c_{*}}\in\H$
for $c_{*}=\int_{S}f/\int_{S}g$ exists for problem (\ref{eq:areapde}),
then, where $\tilde c_{\tilde N}$ is the minimizer of $\big\Vert u_{c}^{(\tilde{N})}\big\Vert_{\H}$ as a function
of $c$,
\[
\abs{\tilde{c}_{\tilde{N}}\int_{S}g-\int_{S}f}\le\tilde{A}_{p}h_{\text{max}}^{p}\norm{u_{c_{*}}}_{\H}=\O\b{h_{\text{max}}^{p}},
\]
for sufficiently small $h_{\text{max}}$.

\end{theorem}

Before the proof of this theorem, it is important to note again here that $\big\Vert u_{c}^{(\tilde{N})}\big\Vert_{\H}$
may diverge as $\tilde{N}\to\infty$ and $h_{\text{max}}\to0$ for a number of reasons; in
fact it must diverge if (\ref{eq:areapde}) is not solvable for some
choice of $c$ by Proposition \ref{prop:Let--be}. Also, the sufficient smoothness of $f-cg$ is only to allow the simplification of (\ref{eq:boundsorder}). If $f-cg$, $S$, or $\partial S$ are such that (\ref{eq:areapde}) does not admit a solution that is smooth enough to be extended to $\H$, (\ref{eq:b1}) and (\ref{eq:b2}) still hold, but result in the divergence of $\big\Vert u_{c}^{(\tilde{N})}\big\Vert_{\H}$, rather than imply the convergence to zero of the left-hand sides of (\ref{eq:b1}) and (\ref{eq:b2}).

We now prove Theorem \ref{thm:method1thm}.
\begin{proof}

Let $\abs{\partial S}:=\int_{\partial S}1$, then (\ref{eq:b2}) implies that

\begin{equation}
\abs{\int_{S}\Delta_{S}u_{c}^{\b{\tilde{N}}}}=\abs{\int_{\partial S}\hat{\vc n}_{\partial S}\cdot\nabla_{S}u_{c}^{\b{\tilde{N}}}}\le\abs{\partial S}B_{p}h_{\text{max}}^{p}\norm{u_{c}^{\b{\tilde{N}}}}_{\H}.\label{eq:b3}
\end{equation}
Using (\ref{eq:b1}), (\ref{eq:b2}), and (\ref{eq:b3}),
\begin{align*}
\abs{\int_{S}\b{f-cg}} & \le\b{\abs SA_{p}+\abs{\partial S}B_{p}}h_{\text{max}}^{p}\norm{u_{c}^{\b{\tilde{N}}}}_{\H}.
\end{align*}
Then, there exists a constant $\tilde{A}_{p}$ so that
\[
\abs{c-\frac{\int_{S}f}{\int_{S}g}}\le\abs{\int_{S}g}^{-1}\tilde{A}_{p}h_{\text{max}}^{p}\norm{u_{c}^{\b{\tilde{N}}}}_{\H}.
\]
In the case that $c=\int_{S}f/\int_{S}g$ and the PDE is solvable,
this tells us nothing. However, this holds even when the PDE is not
solvable, which gives us an estimate for the rate at which $\big\Vert u_{c}^{(\tilde{N})}\big\Vert_{\H}$
diverges:
\[
\norm{u_{c}^{\b{\tilde{N}}}}_{\H}\ge\frac{h_{\text{max}}^{-p}\abs{\int_{S}g}}{\tilde{A}_{p}}\abs{c-\frac{\int_{S}f}{\int_{S}g}}.
\]

Now assume that an exact, extended, strong-form solution $u_{c_{*}}\in\H$
for $c_{*}=\int_{S}f/\int_{S}g$ exists for problem (\ref{eq:areapde}).
In this case, $\big\Vert u_{c_*}^{(\tilde{N})}\big\Vert_{\H}\le\norm{u_{c_{*}}}_{\H}$
since $u_{c_{*}}$ is feasible for (\ref{eq:areadisc}). Recall that
$\tilde{c}_{\tilde{N}}=\b{\vc g^{*}\vc{\Phi}^{-1}\vc f}/\b{\vc g^{*}\vc{\Phi}^{-1}\vc g}$
is the minimizer of $\big\Vert u_{c_*}^{(\tilde{N})}\big\Vert_{\H}$ as a function of $c$ and that we already know that $\tilde{c}_{\tilde{N}}\to c_{*}$
from Eq. (\ref{eq:esti}). Then,
\[
\norm{u_{c_{*}}}_{\H}\ge\norm{u_{c_{*}}^{\b{\tilde{N}}}}_{\H}\ge\norm{u_{\tilde{c}_{\tilde{N}}}^{\b{\tilde{N}}}}_{\H}\ge\frac{h_{\text{max}}^{-p}\abs{\int_{S}g}}{\tilde{A}_{p}}\abs{\tilde{c}_{\tilde{N}}-c_{*}}.
\]
So, we conclude that $\tilde{c}_{\tilde{N}}\to c_{*}$ at a rate $\O\b{h_{\text{max}}^{p}}$
as $h_{\text{max}}\to0$, since
\[
\abs{\tilde{c}_{\tilde{N}}-c_{*}}\le\tilde{A}_{p}\abs{\int_{S}g}^{-1}h_{\text{max}}^{p}\norm{u_{c_{*}}}_{\H}.
\]
Noting that $\tilde{c}_{\tilde{N}}\int_{S}g$ is our estimate for
$\int_{S}f$ and that $c_{*}=\int_{S}f/\int_{S}g$, we also have
\[
\abs{\tilde{c}_{\tilde{N}}\int_{S}g-\int_{S}f}\le\tilde{A}_{p}h_{\text{max}}^{p}\norm{u_{c_{*}}}_{\H}=\O\b{h_{\text{max}}^{p}}.
\]
\end{proof}
This estimate depends only on the norm of the PDE solution for $c=c_{*}$
and tells us that convergence of the integral estimate is at the rate
at which $\Delta_{S}u_{c_{*}}^{(\tilde{N})}$ and $\hat{\vc n}_{\partial S}\cdot\nabla_{S}u$
converge when solving the PDE for $c=c_{*}$.

\subsection{Method 2: Dimension Reduction\label{subsec:Dimension-Reduction}}

An alternative approach for estimating $\int_{S}f$ is to consider
\begin{equation}
\Delta_{S}u=f,\label{eq:poisson}
\end{equation}
where $S$ is a manifold with boundary. If $S$ is without boundary,
we divide it into submanifolds with boundary; this is discussed in
Subsection \ref{subsec:Meshfree-Domain-Decomposition}. We now apply
the divergence theorem:
\begin{equation}
\int_{S}f=\int_{S}\Delta_{S}u=\int_{\partial S}\nabla_{S}u\cdot\hat{\vc n}_{\partial S},\label{eq:divthm}
\end{equation}
where $\hat{\vc n}_{\partial S}$ is again the conormal vector along $\partial S$. It is useful to note that we could use any boundary
condition for (\ref{eq:poisson}). For the meshfree methods considered
in this paper, we do not require any boundary conditions at all. According
to Proposition \ref{prop:Let--be} and Corollary \ref{cor:Where--is},
as long as (\ref{eq:poisson}) admits at least one solution in $\H$,
a suitable symmetric meshfree method will produce solutions that converge
to the norm-minimizing solution to (\ref{eq:poisson}) in $\H$. This
emphasizes a key feature of these sorts of meshfree methods: problems
that are ill-posed by virtue of admitting multiple solutions become
well-posed when the norm-minimization condition is imposed.

Once the divergence theorem is applied to obtain (\ref{eq:divthm}),
we are left with an integral on a lower-dimensional manifold. The
process can then be repeated until only a line integral remains. The
discretized problem for (\ref{eq:poisson}) is, for a point cloud
$\{\x_{j}\}_{j=1}^{\tilde{N}}\subset S$,

\begin{align}
\text{minimize, over }u\in\H: & \norm u_{\H}\label{eq:optimlaplace}\\
\text{subject to } & \Delta_{S}u\b{\x_{j}}=f\b{\x_{j}}\text{ for each \ensuremath{j\in\cb{1,2,\ldots,\tilde{N}}}}.\nonumber 
\end{align}
Let $u^{(\tilde{N})}$ be the solution to this problem. If $f,S$,
and the functions in $\H$ are sufficiently smooth, then according
to (\ref{eq:boundsorder}), we have
\begin{align}
\abs{\Delta_{S}u^{\b{\tilde{N}}}-f} & \le A_{p}h_{\text{max}}^{p}\norm{u^{\b{\tilde{N}}}}_{\H}\text{ on }S,\label{eq:b1-1}
\end{align}
for some $p,A_{p}>0$. We immediately have
\begin{align*}
\abs{\int_{S}\Delta_{S}u^{\b{\tilde{N}}}-\int_{S}f}=\abs{\int_{\partial S}\nabla_{S}u^{\b{\tilde{N}}}\cdot\hat{\vc n}_{\partial S} - \int_{S}f} & \le A_{p}\abs Sh_{\text{max}}^{p}\norm{u^{\b{\tilde{N}}}}_{\H},
\end{align*}
where $\abs S:=\int_{S}1$. This can be repeated for each dimension
until $\int_{\partial S}\nabla_{S}u^{(\tilde{N})}\cdot\hat{\vc n}_{\partial S}$
is simply a line integral.

\subsection{Line Integrals\label{subsec:Line-Integrals}}

Line integrals can be evaluated by a wide variety of methods. Curves
are much simpler to parametrize than surfaces, so a standard quadrature
scheme could be implemented. However, we could also continue to use
meshfree methods by considering the first-order problem
\begin{equation}
\nabla u\cdot\hat{\vc t}_{C}=f\text{ on }C,\label{eq:firstord}
\end{equation}
where $C$ is an open curve of interest and $\hat{\vc t}_{C}$ denotes
its unit tangent vector. If we have a closed curve, such as the boundary
of a surface, we first divide it into two open curves. Then, $\int_{C}f=u\,(\vc b)-u\,(\vc a)$,
where $\vc a$ and $\vc b$ are the endpoints of $C$. To discretize
(\ref{eq:firstord}), we use

\begin{align}
\text{minimize, over }u\in\H: & \norm u_{\H}\label{eq:fodisc}\\
\text{subject to } & \b{\nabla u\cdot\hat{\vc t}_{C}}\b{\x_{j}}=f\b{\x_{j}}\text{ for each \ensuremath{j\in\cb{1,2,\ldots,\tilde{N}}}},\nonumber 
\end{align}
where $\{\x_{j}\}_{j=1}^{\tilde{N}}$ is now a point cloud on $C$.
Once again, with suitable smoothness assumptions, it is straightforward
to show that
\[
\abs{\int_{C}f-\b{u^{\b{\tilde{N}}}\b{\vc b}-u^{\b{\tilde{N}}}\b{\vc a}}}\le A_{p}h_{\text{max}}^{p}\norm{u^{\b{\tilde{N}}}}_{\H},
\]
for some $A_{p}>0$, where $u^{(\tilde{N})}$ is the solution to
(\ref{eq:fodisc}) and $p$ depends on the smoothness of $f$, $C$, and
the functions in $\H$.

\subsection{Solving Surface Poisson Problems\label{subsec:Solving-Surface-Poisson}}

A complication of solving (\ref{eq:areadisc}) or (\ref{eq:optimlaplace})
on surfaces is that it is not immediately obvious how to compute $\Delta_{S}u$
on a surface; $u$ is a function on a domain $\Omega\supset S$, not
on the surface itself, and $\Delta u\vert_{S}\ne\Delta_{S}u$ in general.
To address this, we note (see Lemma 1 of \cite{xu03})
\begin{equation}
\Delta_{S}u=\Delta u-\kappa\hat{\vc n}_{S}\cdot\nabla u-\hat{\vc n}_{S}\cdot\b{D^{2}u}\hat{\vc n}_{S}\text{ on }S,\label{eq:lapbel}
\end{equation}
where $\hat{\vc n}_{S}$ is the unit normal vector to the surface
$S$, $D^{2}u$ is the Hessian, and $\kappa=\nabla_{S}\cdot\hat{\vc n}_{S}$
is the sum of principal curvatures, which is often referred to as
the mean curvature in surface PDE communities (the usual mean curvature
would be $\kappa/2$ for a surface in this notation). Examining (\ref{eq:lapbel}),
we note that knowing or estimating $\hat{\vc n}_{S}$ is unavoidable.
For surfaces described solely by point clouds, $\hat{\vc n}_{S}$
must be estimated; this is a common problem in point cloud processing,
and can be handled by fitting a local level set to the point cloud.
This could be done with a local version of the algorithms described
in \cite{carr01,maced11}. A local high-order parametrization could
also be computed, such as in \cite{bruno07}. An analytic expression
for $\kappa$ is readily available if $S$ is described by a piecewise
parametrization or a level set. We also note that the sign of $\hat{\vc n}_{S}$
is not needed as long as the sign of $\kappa\hat{\vc n}_{S}$ is accurate.
This is noteworthy since $\kappa\hat{\vc n}_{S}$ can be estimated
locally even when the surface is non-orientable or when a consistent
orientation is difficult to obtain.

We use either of the two optimization problems below to impose $\Delta_{S}u=f$
on the point cloud. First, for when both $\kappa$ and $\hat{\vc n}_{S}$
are known or can be computed:

\begin{align}
\text{minimize, over }u\in\H: & \norm u_{\H}\label{eq:curvlapbel}\\
\text{subject to } & \Delta u-\kappa\hat{\vc n}_{S}\cdot\nabla u-\hat{\vc n}_{S}\cdot\b{D^{2}u}\hat{\vc n}_{S}=f\text{ on }\cb{\x_{j}}_{j=1}^{\tilde{N}}.\nonumber 
\end{align}
This approach is particularly useful for implicit surfaces, where
a parametrization may not be known and meshing may be difficult, but
analytic expressions for $\kappa$ and $\hat{\vc n}_{S}$ can be obtained.
The other approach, for when only $\hat{\vc n}_{S}$ can be determined,
is given by:

\begin{align}
\text{minimize, over }u\in\H: & \norm u_{\H}\label{eq:normalderzero}\\
\text{subject to } & \Delta u-\hat{\vc n}_{S}\cdot\b{D^{2}u}\hat{\vc n}_{S}=f\text{ on }\cb{\x_{j}}_{j=1}^{\tilde{N}},\nonumber \\
 & \hat{\vc n}_{S}\cdot\nabla u=0\text{ on }\cb{\x_{j}}_{j=1}^{\tilde{N}}.\nonumber 
\end{align}
We note that it is unnecessary to impose both $\hat{\vc n}_{S}\cdot\nabla u\b{\x_{j}}=0$
and $\hat{\vc n}_{S}\cdot\b{D^{2}u}\hat{\vc n}_{S}=0$ here, although
this has been done before in other surface PDE methods \cite{piret12}
and would work here as well. However, imposing both these conditions
results in larger matrices, a longer computation time, and a smaller
space of feasible interpolants. A smaller constraint set then leads
to a larger value of $\Vert\tilde{u}\Vert_{\H}$ for the interpolant
solution, which in turn can result in a less accurate solution. See
\cite{venn} for more discussion on solving surface PDEs with this
approach. 

~

\subsection{Creating Surface Subdomains\label{subsec:Meshfree-Domain-Decomposition}}

\subsubsection{Voronoi Cells}

For the method described in Subsection \ref{subsec:Dimension-Reduction},
we require $S$ to be with boundary. If $S$ is without boundary,
we must write it as the union of subdomains with boundary: $S=\bigcup_{j=1}^{N_{S}}S_{j}$.
We would like this process to be completely automated. A very simple
process is to divide $S$ into a number of Voronoi cells. Using a
sparse point cloud $\cb{\vc y_{j}}_{j=1}^{N_{S}}$, we define
\[
S_{j}:=\cb{\x\in S:\norm{\x-\y_{j}}_{2}\le\min_{k\ne j}\norm{\x-\y_{k}}_{2}},
\]
where $\norm{\cdot}_{2}$ is simply the Euclidean norm in the embedding
space (e.g. if $S$ is a surface embedded in $\R^{3}$, then the Voronoi
cells use the Euclidean norm in $\R^{3}$, not the metric on $S$).
One motivation for using Voronoi cells is that the necessary vectors
on $\partial S_{j}$ are straightforward to compute. Specifically,
$\hat{\vc t}_{\partial S_{j}}$ is in the intersection of the tangent
spaces of $S$ and the plane $P_{jk}$ that separates $S_{j}$ from
a neighbouring Voronoi cell $S_{k}$. For surfaces embedded in $\R^{3}$,
this means $\hat{\vc t}_{\partial S_{j}}\propto\hat{\vc n}_{S}\times\hat{\vc n}_{P_{jk}}$.
The outward normal to $\partial S_{j}$ is then given by $\hat{\vc n}_{\partial S_{j}}\propto\hat{\vc t}_{\partial S_{j}}\times\hat{\vc n}_{S}$.

Point placement in $S_{j}$ can then be automated using algorithms
A.1 or A.2 from \cite{venn24b}, or any other existing approach for
generating point clouds (see \cite{suchd23} for a review). As long
as $S_{j}$ is diffeomorphic to a simply connected subset of the plane
such that $\partial S_{j}$ consists of piecewise differentiable segments
(the boundaries between $S_{j}$ and neighbouring Voronoi cells),
points can be placed on $\partial S_{j}$ by placing evenly spaced
points between the adjacent corners of $S_{j}$, then moving these
points along the plane separating $S_{j}$ from a neighbouring Voronoi
cell until they intersect $S$ (via Newton's method if $S$ is a level
set, for example). Integrals on $\partial S_{j}$ are estimated using
the method from Subsection \ref{subsec:Line-Integrals} on each piecewise
differentiable segment.

\subsubsection{Planar Boundary\label{subsec:Planar-Boundary}}

A simple approach to divide closed surfaces into two subdomains is
to use a single plane defined by $P=\{\x\in\R^{3}:\vc p\cdot\vc x-c=0\}$
for some constant $c$. In this case, as long as $P$ is chosen such that it is nowhere tangent to the surface, the two subdomains are given
by
\[
S_{\pm}=\overline{\cb{\x\in S:\text{sign}\b{\vc p\cdot\vc x-c}=\pm1}},
\]
and the boundaries $\partial S_{\pm}$ are closed planar curves.
A complication that can arise for closed surfaces that bound a non-convex volume is that
each boundary may consist of a collection of disjoint curves. We note
here that the meshfree approach is primarily advantageous for surfaces
where high-order meshing may be challenging; integrals on curves may
be computed more simply by other means. A straightforward technique
for identifying and integrating over multiple planar, closed curves
is as follows:
\begin{enumerate}
\item Generate points $\cb{\y_{j}}_{j=1}^{\tilde{N}_{\partial}}$ on $\partial S_{\pm}$.
This can be done using algorithm A.1 from \cite{venn24b} modified
to constrain points to the plane via projection.
\item For each point in $\cb{\y_{j}}_{j=1}^{\tilde{N}_{\partial}}$ , find
its nearest neighbour $\vc y_{j}^{\b 1}\in\cb{\y_{k}}_{k=1,k\ne j}^{\tilde{N}_{\partial}}$.
\item For each point in $\cb{\y_{j}}_{j=1}^{\tilde{N}_{\partial}}$, find
the nearest point $\y_{j}^{\b 2}$ such that $(\vc y_{j}-\vc y_{j}^{(1)})\cdot(\vc y_{j}-\vc y_{j}^{(2)})<0$
(so that $\vc y_{j}^{\b 1}$ and $\y_{j}^{\b 2}$ are on opposite
sides of $\y_{j}$). That is, $\y_{j}^{\b 2}:=\text{argmin}\,\{\Vert\y-\vc y_{j}\Vert_2:\y\in\{\y_{k}\}_{k=1}^{\tilde{N}_{\partial}},(\vc y_{j}-\vc y_{j}^{(1)})\cdot\b{\vc y_{j}-\y}<0\}$.
\item For each trio $\y_{j},\y_{j}^{\b 1},\y_{j}^{\b 2}$, construct the
circle or line passing through each of the three points. Parametrize
circles by the angle $\theta$ between the points on the circle, the
circle's centre, and $\y_{j}$. Parametrize lines by $s$ where $s\b{\x}=(\x-\y_{j})\cdot(\y_{j}^{(1)}-\y_{j})/\Vert\y_{j}^{\b 1}-\y_{j}\Vert_{2}$
(signed distance from $\y_{j}$).
\item Compute weights for each trio so that quadratic functions in $\theta$
or $s$ are integrated exactly on the arc/line between $\y_{j}^{\b 1}$
and $\y_{j}^{\b 2}$.
\item Each point $\y_{j}$ centres a trio and should be included in two
other trios. Set the final weight for $\y_{j}$ to be half its weight
in the trio it centres added to half the sum of its weights from the
two other trios it is included in.
\end{enumerate}
This approach approximates the curve geometry by fitting a circle
to each set of three points; approximating the curve geometry is necessary
to have an order of convergence greater than two; this
approach is third-order. Note that this method also naturally handles
multiple curves by only using nearest neighbours. If $\{\y_{j}\}_{j=1}^{\tilde{N}_{\partial}}$
is dense enough on each constituent curve of $\partial S_{\pm}$,
each trio of neighbouring points will belong to the same closed curve,
correctly approximating the geometry of $\partial S_{\pm}$.

\section{Numerical Tests\label{sec:Numerical-Tests}}

\subsection{Genus-Two Surface}

Implicit surfaces and surfaces of genus greater than zero can be challenging
to mesh, particularly at low resolutions. Existing meshfree methods
often assume that the distribution from which points are selected
is known; without a parametrization of the surface, this distribution
may be unknown, and it can be difficult to generate a uniform distribution.

For the next set of tests, we examine the surface $S$ defined as
the zero level set of
\begin{equation}
\vp\b{x,y,z}:=\frac{1}{4}\b{\b{x-1}^{2}+y^{2}}^{-1}+\frac{1}{4}\b{\b{x+1}^{2}+y^{2}}^{-1}+\frac{1}{10}x^{2}+\frac{1}{4}y^{2}+z^{2}-1.\label{eq:levset}
\end{equation}
This is a genus-two surface for which we previously computed Laplace-Beltrami
eigenvalues in \cite{venn24b}. 

\subsubsection{Average Values\label{subsec:Average-Values}}

Recall that Method 1 computes ratios of integrals: $\int_{S}f/\int_{S}g$.
This means that it is straightforward to compute average values by
setting $g=1$. To start, we use approximately uniformly generated
point clouds using version B of algorithm A.1 of \cite{venn24b}
with 25 test points per point in the surface point cloud $S_{\tilde{N}}$. From a highly refined
computation, we find that the average value of $x^{2}$ on the surface
given by the zero set of $\vp$ from (\ref{eq:levset}) is 2.45884.
We compare Method 1, using (\ref{eq:normalderzero}) to solve the
Laplace-Beltrami problem, to integrating on a triangle mesh generated
with the ball-and-pivot method \cite{berna99} as
implemented by the MATLAB Lidar Toolbox's \cite{lidar} $\code{pc2surfacemesh}$,
using linear interpolation between vertices (as in P1 finite elements).
Integrating over the triangulation using a higher-order interpolation
method would require a high-order approximation of the surface geometry
and interpolation of additional points inside or outside the triangle
(see \cite{reege16} for example, for constructing a more accurate
interpolant on a triangulation using RBFs). 

For the Hilbert space $\H$, we use a separable basis as described
at the end of Example \ref{exa:Consider-placing-a} with
\begin{equation}
d_{x,n_{x}}=\b{\exp\b{q\sqrt{2\pi/T}}+\exp\b{q\sqrt{\abs{\omega_{x,n_{x}}}}}}^{2}.\label{eq:dsep}
\end{equation}
$d_{y,n_{y}}$ and $d_{z,n_{z}}$ are defined similarly, substituting
$\omega_{y,n_{y}}$ and $\omega_{z,n_{z}}$, respectively, in
place of $\omega_{x,n_{x}}$. We use $T=12$, $\Omega=[-5,5]^{3}$
and $(2\cdot80+1)^{3}$ Fourier basis functions (treated separably
to form the $\vc{\Phi}$ matrix so that this is
computationally feasible) to generate Figure \ref{fig:Convergence-plot-for},
which shows the convergence of the meshfree method compared against
triangulation and simply taking the average value on the point cloud
(Point Cloud Average), as in a Monte Carlo method. The reference value we use to estimate the relative error is computed using a triangulation with 4.8 million vertices. 

We test a couple
of different values of $q$; our observation is that for domains roughly
of this size, a choice of $q$ between 2 and 4 works reasonably well.
Functions with more oscillations are often better approximated using
lower values of $q$ and $T$ to reduce the suppression of higher frequencies.
However, as demonstrated in Figure \ref{fig:Convergence-plot-for},
an optimal selection of $q$ is not necessary to achieve rapid convergence,
and multiple values of $q$ (or $T$) could work well. The number of Fourier basis functions should be substantially larger than the total number of interpolation conditions, and larger still to capture rapid oscillations; a more detailed discussion of the selection of the number of frequencies for solving PDEs is in Remark 2 and Proposition 9 of \cite{venn}. It should be noted that
the points are not quite uniformly generated, which explains why the
“Point Cloud Average” method fails to converge. The point cloud
with $\tilde{N}=800$ is shown in Figure \ref{fig:pc800-1} (Left).

\begin{figure}[H]
\begin{centering}
\includegraphics[scale=0.5]{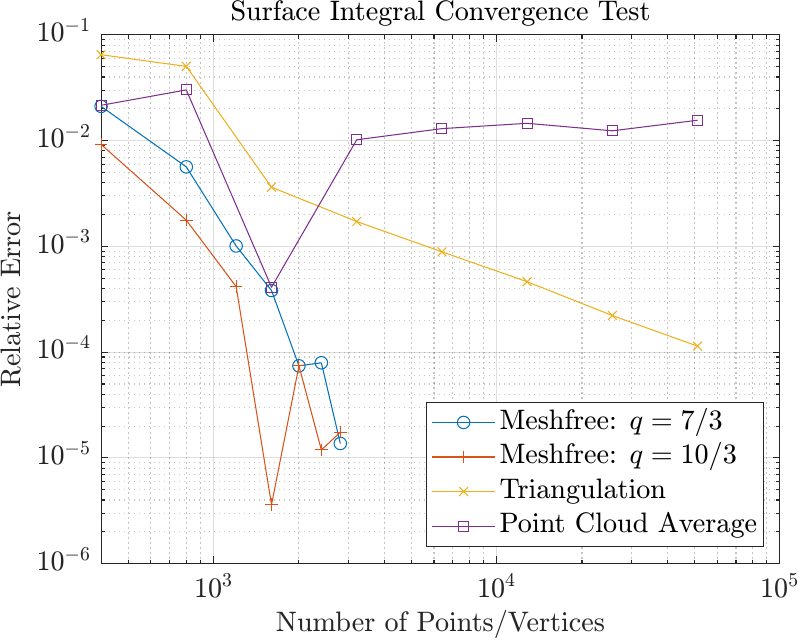}
\par\end{centering}
\caption{\label{fig:Convergence-plot-for}Convergence plot for estimates of
the average value of $x^{2}$ on a genus-two surface. Two shape parameters
are shown for the meshfree method ($q=7/3,10/3$).}
\end{figure}

There are a few observations to make regarding Figure \ref{fig:Convergence-plot-for}.
For the triangulation approach, the error is high before falling into
a pattern of consistent $\O\,(h_{\text{max}}^{2})=\O\,(\tilde{N}^{-1})$
convergence; this is due to meshing errors at low point resolutions.
The meshfree methods reach a relative error on the order of $10^{-5}$ using
only around two or three thousand points. This is roughly the error floor using
the $\vc{\Phi}$ matrix, so other strategies (namely, complete orthogonal
decomposition or singular value decomposition) would have to be employed
to solve the optimization problem with higher accuracy. For a fast and
reasonably accurate approach using a sparse point cloud, however,
the separable basis method excels. It would take a mesh with nearly
100000 vertices to reach the level of accuracy achieved by the meshfree
method with under 3000 points.

We recall that the first $\tilde{N}$ components of $\vc g^*\vc\Phi^{-1}/(\vc g^*\vc\Phi^{-1} \vc g)$ can be interpreted as integration weights for computing average values; call the vector consisting of these components $\tilde {\vc w}^*$. While not a factor in the error estimates of \ref{subsec:Analysis} as it may be in integration schemes based on exactly integrating an interpolant (see, for example, the discussion around Eq. (2.4) of \cite{vande20}), the quantity $\kappa := \sum_{j=1}^{\tilde N} |\tilde w_j|/\sum_{j=1}^{\tilde N} \tilde w_j$, which is equal to $\norm{\tilde{\vc w}}_1$ when $g=1$, is still relevant for stability. Specifically, for two functions $f_1,f_2$, our average value estimate will differ by at most $\Vert\tilde{\vc w}\Vert_1\Vert f_1-f_2\Vert_{L^\infty (S)}$. This means the method's ability to handle noise, rounding errors, and small deviations from smooth functions is determined by $\norm{\tilde{\vc w}}_1$. There is naturally a trade-off between accuracy and stability; this is demonstrated in Table \ref{tab:stability}, which shows $\Vert\tilde{\vc w}\Vert_1$ and the error in the average value of $x^2$ on $S$ for select values of $\tilde N, q,$ and $T$. We see that all values of $\Vert\tilde{\vc w}\Vert_1$ remain fairly close to one, but higher values of $T$ or $q$ generally produce more accurate results at the cost of a higher value of $\Vert\tilde{\vc w}\Vert_1$.

\begin{table}[H]
\footnotesize
\begin{centering}
\begin{tabular}{ccccccc}
\toprule 
 &  & \multicolumn{2}{c}{$\tilde N = 1600$} &  & \multicolumn{2}{c}{$\tilde N = 2400$}\tabularnewline
\midrule
$(q,T)$ &  & Relative Error & $\norm{\tilde{\vc w}}_1$ &  & Relative Error & $\norm{\tilde{\vc w}}_1$ \tabularnewline
\midrule
(7/3, 2) &  & 7.2741E-04 & 1.03 &  & 1.3862E-04 & 1.05\tabularnewline

(7/3, 12) &  & 3.8281E-04 & 1.10 &  & 7.9380E-05 & 1.14\tabularnewline

(10/3, 2) &  & 1.8272E-05 & 1.08 &  & 9.7313E-06 & 1.12\tabularnewline

(10/3, 12) &  &3.6092E-06 & 1.19 &  & 1.1916E-05 & 1.26\tabularnewline
\bottomrule
\end{tabular}
\par\end{centering}
\caption{\label{tab:stability}Relative error for computing $x^2$ on a genus-two surface and $\norm{\tilde{\vc w}}_1$ for various combinations of $\tilde N, q$, and $T$.}
\end{table}

An important note is that the meshfree approach does not require the point
cloud to be “well-spaced”. That is, the point cloud may be significantly
denser in some regions of the surface or have completely random point
spacing, and the meshfree approach will still produce a reasonable
result. Monte Carlo methods will clearly fail when points are drawn
from an unknown, non-uniform distribution, and meshing algorithms
tend to produce errors when the point cloud spacing
is extremely uneven. We use randomly generated point clouds using
algorithm A.1 of \cite{venn24b} (1 test point per point) for the
tests in Table \ref{tab:Relative-error-for} and use $q=3$ for the meshfree method, with the
other parameters left unchanged from Figure \ref{fig:Convergence-plot-for}.
Unlike in the previous test, the point cloud is not processed at all
here after points are placed on the surface; the points are not as
evenly spaced as in the previous test. This is shown
in Figure \ref{fig:pc800-1} (Centre). Meshing these point clouds
using MATLAB's ball-and-pivot routine produces many missing elements, so we
do not include it in this test.

\begin{figure}[H]
\begin{centering}
\includegraphics[scale=0.4]{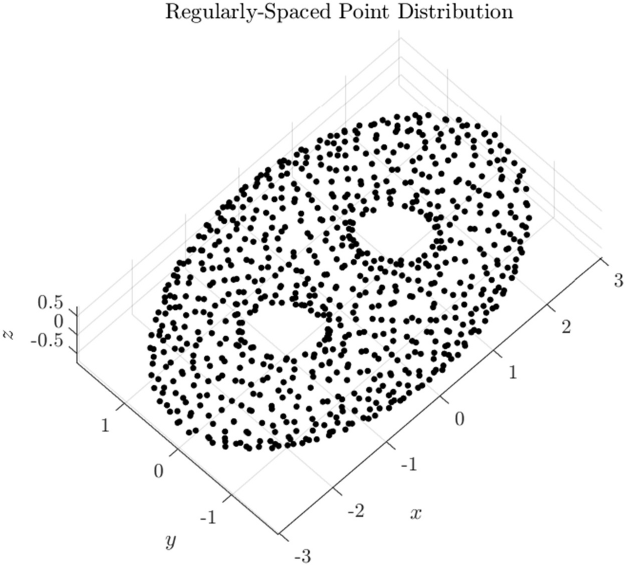}~\includegraphics[scale=0.4]{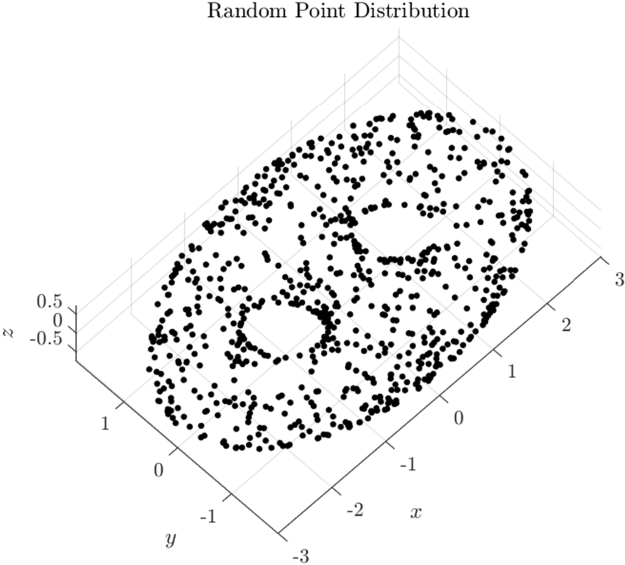}\includegraphics[scale=0.4]{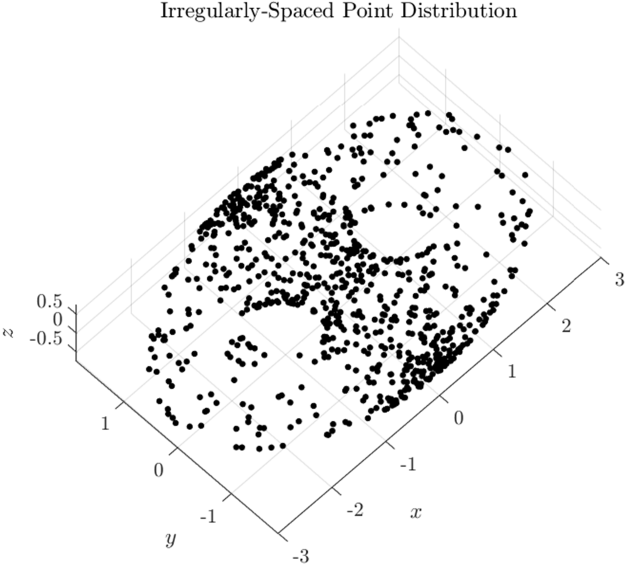}
\par\end{centering}
\caption{\label{fig:pc800-1}Three point clouds on a genus-two
surface with 800 points. Left: Regularly-spaced. Centre: Random. Right:
Irregularly-spaced.}
\end{figure}

\begin{table}[H]
\footnotesize
\begin{centering}
\par\end{centering}
\begin{centering}
\begin{tabular}{cccc}
\toprule 
$\tilde{N}$ & Relative Error (Random) &  & Relative Error (Irregular)\tabularnewline
\midrule
400 & 4.5328E-02 &  & 5.0402E-01\tabularnewline
800 & 5.6759E-03 &  & 2.0952E-01\tabularnewline
1200 & 2.4342E-03 &  & 4.3325E-02\tabularnewline
1600 & 6.1477E-04 &  & 3.5045E-02\tabularnewline
2000 & 7.0428E-04 &  & 9.5554E-03\tabularnewline
2400 & 2.3534E-05 &  & 2.0272E-03\tabularnewline
\bottomrule
\end{tabular}
\par\end{centering}
\caption{\label{tab:Relative-error-for}Left: Relative error for computing
the average value of $x^{2}$ on a genus-two surface, using randomly
generated points (Figure \ref{fig:pc800-1}, Centre) and an irregular point distribution (Figure \ref{fig:pc800-1}, Right). We use $q=3$ for both tests.}
\end{table}

The error for the meshfree method is higher than that of using a more uniform
point distribution as in Figure \ref{fig:Convergence-plot-for}, but
convergence still appears to be super-algebraic.

Finally, we produce highly non-uniform point clouds by oversampling
points with $x$ values of small magnitude. Specifically, we place
half of the points so that $x\in\b{-1,1}$, then sample the remaining
points randomly over the whole surface. An example of such a point
cloud is shown in Figure \ref{fig:pc800-1} (Right). Taking the average
value of a function on the point cloud is clearly not an option
for such a point distribution, and meshing such a point cloud will
produce a large number of errors. The same parameters as the random
point cloud test are used for Table \ref{tab:Relative-error-for}
(Right), which shows the relative error for various values of $\tilde{N}$
on the irregular point cloud. The meshfree method improves consistently
with each refinement. The errors are consistently higher here; this
is not surprising, since $h_{\text{max}}$ is significantly larger
with such an uneven point distribution.

\subsubsection{Voronoi Domain Decomposition and Surface Area\label{subsec:Domain-Decomposition-and}}

Using the procedure of Subsection \ref{subsec:Meshfree-Domain-Decomposition},
we can divide the genus-two surface into a collection of subdomains,
each diffeomorphic to a simply connected subset of the plane. An example
of such a decomposition is shown in Figure \ref{fig:Veronoi-cell-subdomains}
(Left), where both the points used for the Voronoi cells and the
surface point cloud are generated using the same procedure as the
first test in Subsection \ref{subsec:Average-Values}. 

We compare the accuracy of Method 2 against triangulation in Table
\ref{tab:Relative-error-for-1-1}, where we compute the surface area
of the genus-two surface. 100 surface patches are used for the meshfree
approach, and the Hilbert space given by (\ref{eq:dsep}) is used,
with $d_{n}$ replacing $d_{x,n_{x}}$ and $\norm{\vc{\omega}_{n}}_{2}$
replacing $\abs{\omega_{x,n_{x}}}$. The parameters $q=5,T=5$ are
chosen, where each surface patch is scaled to fit in the cube $\sb{-1/4,1/4}^{3}$,
and then the computations for each patch are performed with a Hilbert
space of functions on $\sb{-1/2,1/2}^{3}$ so that the box side length
of 1 is the same for each patch. We use $\b{2\cdot11+1}^{3}$ Fourier
basis functions in the box, and the resulting optimization
problem ((\ref{eq:normalderzero}) leading to (\ref{eq:undfour})
when the basis is truncated). The constrained problem (\ref{eq:undfour})
is solved using a complete orthogonal decomposition (as opposed to
solving $\vc{\Phi\beta}=\vc f$) for all remaining numerical tests.

\begin{table}[H]
\footnotesize
\begin{centering}
\begin{tabular}{ccccccc}
\toprule 
 &  & \multicolumn{2}{c}{Meshfree} &  & \multicolumn{2}{c}{Triangulation}\\
\midrule
$\tilde{N}$ &  & Estimate & Rel. Diff.&  & Estimate & Rel. Diff.\tabularnewline
\midrule

4000 &  & \textcolor{olive}{46}.5490998641338 & 1.4987E-03 &  & \textcolor{olive}{46}.5787702744442 & 8.6147E-04\tabularnewline

8000 &  & \textcolor{olive}{46}.\textcolor{olive}{61}32362564690 & 1.2294E-04 &  & \textcolor{olive}{46}.5985576828876 & 4.3702E-04\tabularnewline

16000 &  & \textcolor{olive}{46}.\textcolor{olive}{618}4531258073 & 1.1038E-05 &  & \textcolor{olive}{46}.\textcolor{olive}{6}088095697633 & 2.1712E-04\tabularnewline

32000 &  & \textcolor{olive}{46}.\textcolor{olive}{61896}00220989 & 1.6443E-07 &  & \textcolor{olive}{46}.\textcolor{olive}{61}37557765005 & 1.1102E-04\tabularnewline

64000 &  & 46.6189676876957 & N/A &  & \textcolor{olive}{46}.\textcolor{olive}{61}63684686587 & 5.4973E-05\tabularnewline

4800030 &  &  \ \hrulefill \   & \ \hrulefill \  &  & 46.6189312401320 & N/A\tabularnewline
\bottomrule
\end{tabular}
\par\end{centering}
\caption{\label{tab:Relative-error-for-1-1}Surface area estimates using the
meshfree method and triangulation. Digits that match all subsequent
estimates are coloured green, and the relative difference
between the estimate and each method's final estimate ($\tilde{N}=64000$ for the meshfree method, $\tilde{N}=4800030$ for the triangulation)
is given as “Rel. Diff.”.}
\end{table}

The rapid convergence of the meshfree method is apparent in this test;
we find the first 6 or 7 digits of the surface area with $\tilde{N}=64000$ while only finding
the first 4 with the triangulation. Also notable is that the meshfree
estimate with $16000$ points appears to be significantly more accurate
than the triangulation estimate with $64000$ vertices. The 4.8 million vertex triangulation is also likely less accurate than the meshfree estimates with 32000 or 64000 points.

\begin{figure}[H]
\begin{centering}
\includegraphics[scale=0.37]{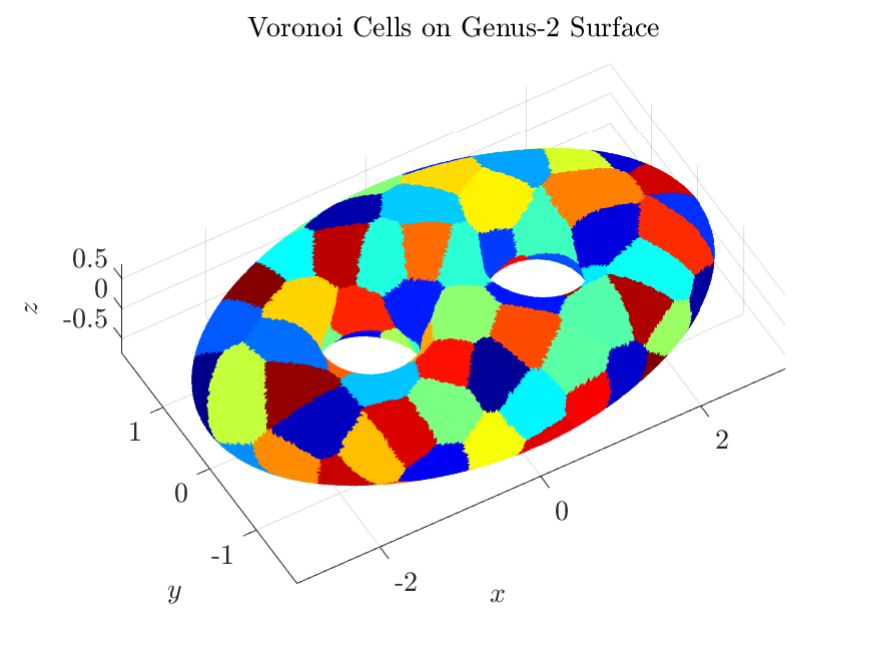}~\includegraphics[scale=0.37]{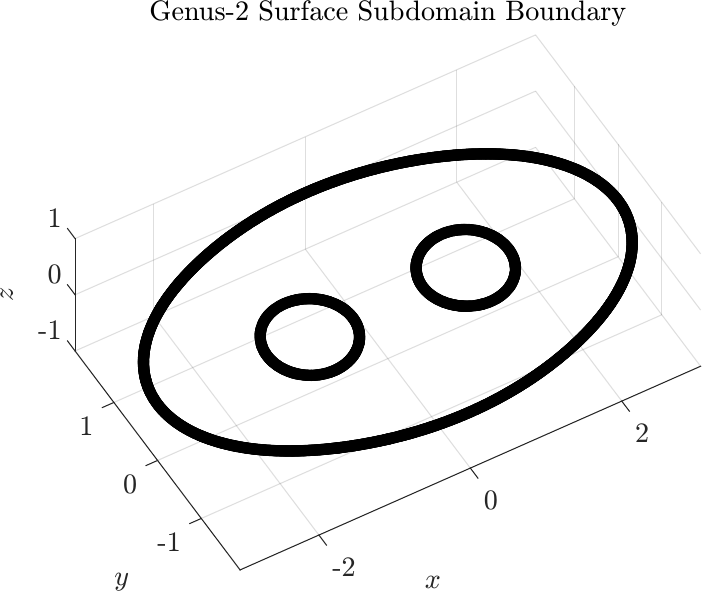}
\par\end{centering}
\caption{\label{fig:Veronoi-cell-subdomains}Left: Voronoi
cell subdomains on a genus-two surface. Right: Planar subdomain boundary
for the genus-two surface.}
\end{figure}

\subsubsection{Planar Boundary Surface Area Test\label{subsec:Planar-Boundary-Surface}}

We repeat the test from \ref{subsec:Domain-Decomposition-and} but
with two subdomains created using a planar boundary as described in
\ref{subsec:Planar-Boundary}; the boundary curves are shown in Figure
\ref{fig:Veronoi-cell-subdomains} (Right), discretized with 2000
points. The $z=0$ plane is used to separate the subdomains. The Hilbert
space given by (\ref{eq:dsep}) is used again, with $d_{n}$ replacing
$d_{x,n_{x}}$, $\Vert\vc{\omega}_{n}\Vert_{2}$ replacing $|\omega_{x,n_{x}}|$,
and $q=5,T=10$, in a box with dimensions $10\times6\times2$ with
$(2\cdot13+1)^{3}$ Fourier modes. For this test, we use (\ref{eq:curvlapbel})
to solve the Laplace-Beltrami problem. The surface area is estimated
again in Table \ref{tab:Relative-error-for-1-1-2}, with the relative
error computed using the $\tilde{N}=64000$ result from Table \ref{tab:Relative-error-for-1-1}
as an approximate “exact” value. We see that we are able to
obtain an accurate estimate with far fewer points than the previous
Voronoi test, though the Voronoi test used fewer points \textit{per
subdomain}, which is the main factor in computation time. This method
achieves a more accurate estimate using $2560$ points than a triangulation
does with 64000, and the 2560 point result is competitive with the 4.8 million vertex triangulation. We also estimate the convergence rate in Table \ref{tab:Relative-error-for-1-1-2}
and see that the method appears to converge at a high-order rate as
expected.

\begin{table}[H]
\footnotesize
\begin{centering}
\begin{tabular}{cccc}
\toprule 
$\tilde{N}$ & Estimate & Relative Error & Convergence ($\O\b{h_{\text{max}}^{p}}$)\tabularnewline
\midrule
320 & 41.9984565553679 & 9.9112E-02 & N/A\tabularnewline%
640 & 45.6178349766375 & 2.1475E-02 & 4.413\tabularnewline
1280 & 46.5745212157448 & 9.5340E-04 & 8.987\tabularnewline
2560 & 46.6180662106845 & 1.9337E-05 & 11.247\tabularnewline
5120 & 46.6189398270088 & 5.9763E-07 & 10.032\tabularnewline
\bottomrule
\end{tabular}
\par\end{centering}
\caption{\label{tab:Relative-error-for-1-1-2}Surface area estimates using
the meshfree method with a planar boundary between subdomains. Convergence
rate is relative to $h_{\text{max}}$, which scales as $\O\b{\tilde{N}^{-\frac{1}{2}}}$.}
\end{table}

\subsection{High-Order Approximation of Singular Integrals\label{subsec:High-Order-Approximation-of}}

In certain applications, particularly in the numerical approximation
of PDEs, the function to be integrated has a singularity. Crucially,
the location of the singularity is typically known. As it turns out,
meshfree methods are well suited to approximating these integrals
with a slight modification of the methods described in Subsection
\ref{subsec:Norm-Minimizing-Hermite-Birkhoff}.

Assume that we want to integrate a function $f:S\to\R$ that has a singularity
at some point $\x_{0}\in S$. As before, we could attempt to find
a function $u$ such that $\Delta_{S}u=f$ to convert $\int_{S}f$
to an integral on $\partial S$. The issue we immediately encounter
is that $u$ may also have a singularity at $\x_{0}$, which would
mean that $u$ would not be in any of the Hilbert spaces of functions commonly
used for meshfree methods, which typically consist of functions that
are at least continuous. Furthermore, we still need $\Delta_{S}v$
to be bounded on $S_{\tilde{N}}$ for any $v$ in the Hilbert space
in order to ensure boundedness of the operator $\L$ from Subsection
\ref{subsec:Norm-Minimizing-Hermite-Birkhoff}. This means that the
functions in $\H$ need to be at least $C^{2}$ at every point in
$S_{\tilde{N}}$. That is, we need to search for functions that are
possibly singular at $\x_{0}$, but are $C^{2}$ everywhere else;
such functions cannot exist in the Hilbert spaces we have used so
far.

Therefore, we need to alter our framework slightly. Rather than a
Hilbert space of functions, we consider a Hilbert space $\H$ equipped
with a (not necessarily injective) linear operator $\E$ that maps elements of the Hilbert space to a function on $\Omega$. We then consider a problem analogous to (\ref{eq:optprob}):

\begin{align}
\text{minimize, over }a\in\H: & \norm a_{\H},\label{eq:optprob-1}\\
\text{subject to}: & \b{\F\b{\E a}}\b{\x_{j}}=f\b{\x_{j}}\text{ for each \ensuremath{j\,}\ensuremath{\in\cb{1,2,\ldots,\tilde{N}},}}\nonumber \\
 & \b{\G\b{\E a}}\b{\y_{j}}=g\b{\y_{j}}\text{ for each \ensuremath{j\,}\ensuremath{\in\cb{1,2,\ldots,\tilde{N}_{\partial}}.}}\nonumber 
\end{align}

$\E$ in this case must map $a\in\H$ to a function that is $C^{p}$,
where $\F,\G$ have a maximum order of $p$, except possibly at a
finite number of points in $S$ (but not in $S_{\tilde{N}}$ or $\partial S_{\tilde{N}_{\partial}}$).
With a suitable choice of $\H$ and $\E$ it is then possible to ensure
that the constraint set for (\ref{eq:optprob-1}) is non-empty and
closed. If a solution in $a\in\H$ exists such that $\F\,(\E a)=f$
on $S$ and $\G\,(\E a)=g$ on $\partial S$, convergence is guaranteed
due to Proposition \ref{prop:Let--be}. We illustrate this approach
with a couple of examples.

\subsubsection{Singular Integrand in 2D\label{subsec:Singular-Integrand-in}}

Integrals of the form $\int_{S}f\,(\y)\,K\,(\x,\y)\,\d\y$, where $K$
has a singularity when $\x=\y$, are common when solving PDEs by boundary
integral approaches and appear in various physical
applications, particularly in electromagnetism. Let $S$ be the unit
disk and then consider the example:
\[
-\frac{1}{2\pi}\int_{S}\ln\norm{\x-\x_{0}}_{2}\,\d\x=\begin{cases}
\frac{1}{4}-\frac{\norm{\x_{0}}_{2}^{2}}{4}, & \norm{\x_{0}}_{2}\le1\\
-\frac{1}{2}\ln\norm{\x_{0}}_{2}, & \norm{\x_{0}}_{2}>1
\end{cases}.
\]
Physically, this corresponds to the electric potential inside and
outside an infinitely long cylindrical wire with constant charge density,
up to a constant; the result as a function of $\x_{0}$ solves $-\Delta u=1$
in $S$ and $0$ outside $S$. To integrate this, we search for a
function $u$ such that $\Delta u=-\frac{1}{2\pi}\ln\norm{\x-\x_{0}}_{2}.$

One possible function $u$ is $-\frac{1}{8\pi}\Vert\x-\x_{0}\Vert_{2}^{2}\ln\Vert\x-\x_{0}\Vert_{2}+\frac{1}{8\pi}\Vert\x-\x_{0}\Vert_{2}^{2}$;
we could simply use this to convert the integral into a boundary integral,
but of course the goal is to be able to integrate functions when
we do not have an expression for $u$. For example, if $S$ were instead
a curved surface and $\Delta u$ were swapped for $\Delta_S u$, we would not have such an expression for a possible $u$ available in general.

Returning to the optimization problem from (\ref{eq:optprob-1}),
for this problem, we will need to solve
\begin{align}
\text{minimize, over }a\in\H: & \norm a_{\H},\label{eq:optprob-1-1}\\
\text{subject to}: & \b{\Delta\b{\E a}}\b{\x_{j}}=-\frac{1}{2\pi}\ln\norm{\x_{j}-\x_{0}}_{2}\nonumber \\
&\text{ for each \ensuremath{j\in\cb{1,2,\ldots,\tilde{N}},}}\nonumber 
\end{align}
 When $\x_{0}\in S$, $-(2\pi)^{-1}\ln\norm{\x_{j}-\x_{0}}_{2}$
is singular at $\x_{0}$, and therefore requires our operator $\mathcal{E}$
to take elements of $\H$ to a function with a singularity at $\x_{0}$.
To be more concrete, let $\H=\ell^{2}\times\ell^{2}$, then, where
$\Omega$ is again a box that contains $S$, let $\E:\H\to C\,(\Omega)\cap C^{2}(\ensuremath{\Omega}\ensuremath{\setminus}\{\x_{0}\})$
be such that
\[
\E a=u+sv,
\]
where $u,v\in C^{q}(\Omega)$ vary with $a$ and $s\in C^{2}(\Omega\setminus\{\x_{0}\})$
is fixed and $\Delta s$ has terms that capture the expected singularity.
For example, $\Delta\,(u+sv)=-\frac{1}{2\pi}\ln\norm{\x-\x_{0}}_{2}$
is solved by $u\,(\x)=\frac{1}{8\pi}\Vert\x-\x_{0}\Vert_{2}^{2},v\,(\x)=-\frac{1}{16\pi},s\,(\x)=\Vert\x-\x_{0}\Vert_{2}^{2}\ln\,(\norm{\x-\x_{0}}_{2}^2)$.
Assuming we did not know this exact solution, but at least knew $\Delta\,(u+sv)$
needed a logarithm singularity with the scaling and any non-singular
terms undetermined, we could use $s\b{\x}=\Vert\x-\x_{0}\Vert_{2}^{2}\ln\Vert\x-\x_{0}\Vert_{2}^{2}$
and
\begin{equation}
\b{\E\b{a,b}}\b x:=\b{\sum_{n=1}^{\infty}d_{n}^{-\frac{1}{2}}a_{n}e^{i\vc{\omega}_{n}\cdot\x}}+s\b{\x}\b{\sum_{n=1}^{\infty}\tilde{d}_{n}^{-\frac{1}{2}}b_{n}e^{i\tilde{\vc{\omega}}_{n}\cdot\x}},\label{eq:E}
\end{equation}
where $\{\vc{\omega}_{n}\}$ and $\{\tilde{\vc{\omega}}_{n}\}$
correspond to Fourier frequencies on the box $\Omega$. Note that
$d_{n}$ and $\tilde{d}_{n}$ control the smoothness of $u$ and $v$,
respectively.

It is important to note that $\Delta s,\partial_{x}s,$ and $\partial_{y}s$
should not be zero on $\Omega\setminus\cb{\x_{0}}$ with a delta distribution
singularity at $\x_0$; this would allow arbitrary multiples of $s,xs$
or $ys$ to be added to our solution and change the resulting integral
(due to the delta distribution).

For our chosen $s$, a logarithm term appears in $\Delta s$, which
will allow $\ln\Vert\x-\x_{0}\Vert_{2}$ to be integrated. After computing
$u+sv$ so that $\Delta\b{u+sv} =-\frac{1}{2\pi}\ln\Vert\x-\x_{0}\Vert_{2}$
on $\tilde{N}$ scattered points, the boundary integral $\oint_{\partial S}\nabla_{}\b{u+sv}\cdot\hat{\vc n}_{\partial S}$
is computed on 1000 evenly-spaced points on the unit circle with the
standard trapezoidal rule (which converges super-algebraically for $C^{\infty}$
functions for the unit circle). The number of boundary points is chosen to ensure that the only significant component of the error is due to the surface discretization. Also note that if $\x_0$ is on or near $\partial S$, then a suitable quadrature for handling singular line integrals would be needed; a similar technique as the surface discretization or a more standard adaptive quadrature could be used.

We use $d_{n}=(\exp(q\sqrt{2\pi/T})+\exp(q\sqrt{\Vert\vc{\omega}_{n}\Vert_{2}}))^{2},q=4,T=10$,
$\Omega=[-2,2]^{2}$, and $(2\cdot30+1)^{2}$ Fourier basis functions. Interior points are generated using algorithm A.2 of \cite{venn24b} with 20 test points per point and no additional weighting of points near the boundary.
The relative error for computing $-\frac{1}{2\pi}\int_{S}\ln\Vert\x\Vert_{2}\,\d\x=\frac{1}{4}$
is shown in Figure \ref{fig:relerr1d-1-1}. High-order convergence
is observed, where we note that the point spacing varies as $h_{\text{max}}\propto\tilde{N}^{-\frac{1}{2}}$.
We note again that a unit circle is only used as a test problem to
compare to a known solution; the method's main use is for irregularly-shaped
domains and surfaces.

\begin{figure}[H]
\begin{centering}
\includegraphics[scale=0.49]{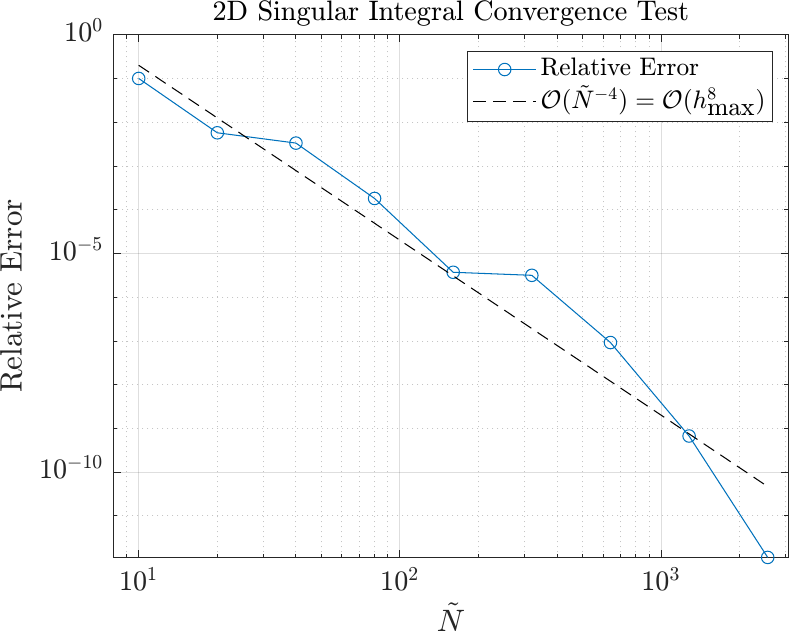}
\par\end{centering}
\caption{\label{fig:relerr1d-1-1}Relative error for computing $-\frac{1}{2\pi}\int_{S}\ln\norm{\x}_{2}\,\d\x$
on $\tilde{N}$ scattered points on the unit disk.}
\end{figure}

\subsubsection{Singular Integrand on a Surface}

We now consider the surface described by $S:\{\b{x,y,z}:\vp\b{x,y,z}=0,z\ge0\}$
where $\vp$ is the level set for a paraboloid: $\vp\b{x,y,z}:=z+x^{2}+y^{2}-1.$
We compute
\begin{equation}
\frac{1}{4\pi}\int_{S}\norm{\x-\x_{0}}_{2}^{-1}\,\d\x,\label{eq:paralog}
\end{equation}
where $\x_{0}=(0.6, 0.4, 0.48)\in S$. This integral can be interpreted as
the electric potential at $\x_{0}$ produced by a constant surface
charge on $S$. We integrate by attempting to find a function of the form $u+sv$ such that $\Delta_{S}\b{u+sv}=\frac{1}{4\pi}\Vert\x-\x_{0}\Vert_{2}^{-1}$,
where $\Delta_{S}$ is now the Laplace-Beltrami operator, and then converting the surface integral
to a line integral on the unit circle in the $xy$-plane. The line
integral is again evaluated using the trapezoidal rule and 1000 evenly
spaced points. The Laplace-Beltrami problem is discretized using a method similar to the one used in the previous test (see Eq. (\ref{eq:optprob-1-1})), but with $\Delta_S$ computed
using (\ref{eq:lapbel}) replacing $\Delta$. We seek a solution of the form $u+sv$ where
\begin{equation}\label{eq:sx}
s\b{\x}=\norm{\x-\x_{0}}_2,
\end{equation}
and $u,v\in C^{\infty}\b{\Omega}$ are unknown. This choice of $s$ gives $\Delta s=2\Vert\x-\x_{0}\Vert_2^{-1}$; however, $\Delta s$ is only one term in $\Delta_S s$. The necessary functions $u$ and $v$ to solve $\Delta_{S}\b{u+sv}=\frac{1}{4\pi}\Vert\x-\x_{0}\Vert_2^{-1}$ will vary depending on the local geometry
of the surface and will be computed numerically.  Whether such a solution will exist in the range of $\E$ is unknown, but the idea is simply to choose an $s$ term such that $\Delta_S s$ captures the singular behaviour in the Poisson problem. One can use (\ref{eq:lapbel}) to show that $\Delta_S s - \Vert\x - \x_0\Vert_2^{-1}\to 0 $ as $\x\to \x_0$ on $S$, noting that $((\x - \x_0)/\Vert\vc x -\vc x_0\Vert_2)\cdot \hat{\vc n}_S=\O\,(\Vert \x - \x_0\Vert_2)$ as $\x\to\x_0$ for $\x $ on $S$. %
We will see that including the $s$ term drastically improves the numerical results. 

The Hilbert space and $\E$ used are identical to
Subsection \ref{subsec:Singular-Integrand-in}, but with $\Omega=[-2,2]^{3}$
(instead of $[-2,2]^{2}$), $(2\cdot11+1)^{3}$ Fourier basis
functions, and $s$ given by (\ref{eq:sx}). The singularity in the integrand is placed at $\x_0=(0.6, 0.4, 0.48)\in S$. Surface points are generated using version
B of algorithm A.1 of \cite{venn24b} with 25 test points per point; notably, we do not place extra points near the singularity. This is compared to the naive approach, where a solution
$u\in C^{\infty}(\Omega)$ is sought, in Table \ref{tab:Relative-error-for-1-1-1}
(effectively searching for solutions of the form $\E\b{a,0}$ only,
where $\E$ is similar to (\ref{eq:E}), but for 3 dimensions). We compare to a value computed (0.613817759) to machine precision from a polar integral resulting from a parametrization, and
see that the augmented method converges to the first 7 or 8 digits
by $\tilde{N}=2560$, while the naive approach fails to converge beyond
a digit or two. The Poisson solution using the augmented approach
for $\tilde{N}=2560$ is shown in Figure \ref{fig:relerr1d-1-1-1}. We note that we do not increase the point density near the singularity at $\x_0$. We also point out that $\x_0$ is not at the vertex of $S$ and no boundary conditions are imposed, so the norm-minimizing solution need not be symmetric about the singularity or appear related to the boundary in any way.

\begin{table}[H]
\footnotesize
\begin{centering}
\begin{tabular}{ccccccc}
\toprule 
 &  & \multicolumn{2}{c}{Augmented} &  & \multicolumn{2}{c}{Naive}\tabularnewline
\midrule
$\tilde{N}$ &  & Estimate & Relative Error &  & Estimate & Relative Error\tabularnewline
\midrule
80 &  & 0.613306203 & 8.3340E-04 &  & 0.589499078 & 3.9619E-02\tabularnewline

160 &  & 0.613575680 & 3.9438E-04 &  & 0.589672687 & 3.9336E-02\tabularnewline

320 &  & 0.613791688 & 4.2475E-05 &  & 0.519003456 & 1.5447E-01\tabularnewline

640 &  &0.613813479 & 6.9734E-06 &  & $-$4.283902192 & 7.9791E+00\tabularnewline

1280 &  & 0.613819810 & 3.3411E-06 &  & $-$3.565659819 & 6.8090E+00\tabularnewline

2560 &  & 0.613817767 & 1.2460E-08 &  & 0.370804966 & 3.9590E-01\tabularnewline
\bottomrule
\end{tabular}
\par\end{centering}
\caption{\label{tab:Relative-error-for-1-1-1}Estimate of (\ref{eq:paralog})
using solutions augmented with a singular term (Augmented) and a naive
approach (Naive).}
\end{table}

\begin{figure}[H]
\begin{centering}
\includegraphics[scale=0.49]{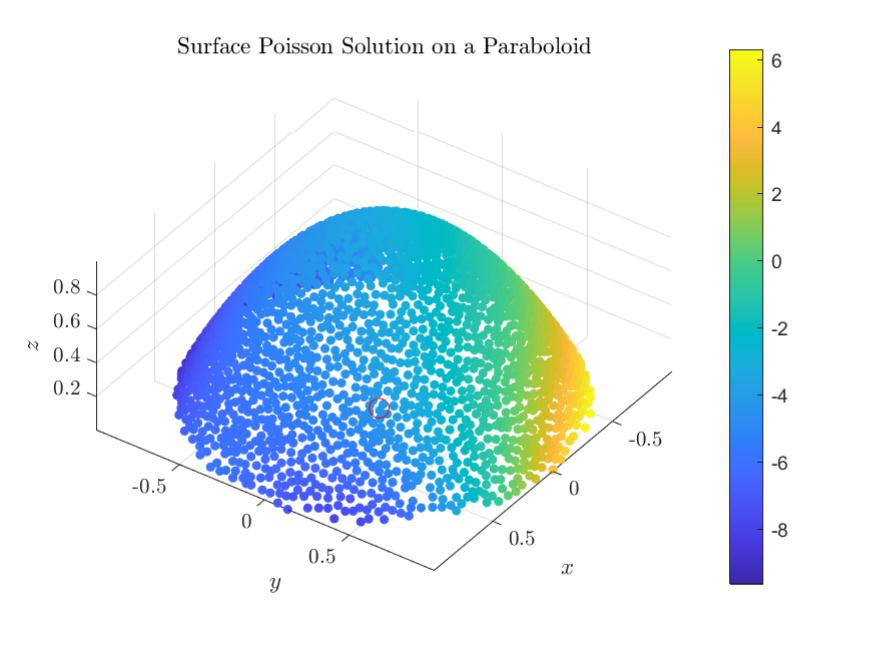}
\par\end{centering}
\caption{\label{fig:relerr1d-1-1-1}A computed solution to $\Delta_{S}u=\frac{1}{4\pi}\Vert\x-\x_{0}\Vert_{2}^{-1}$
on the paraboloid using the augmented method with $\tilde{N}=2560$
points ($\x_{0}=\b{0.6,0.4,0.48}$). The location of the singularity in the integrand is shown with a red circle.}
\end{figure}

\section{Conclusions}

We presented, analyzed, and tested two high-order, meshfree techniques
for surface integration. Method 1 (Subsection \ref{subsec:Method-=0000231:-Poisson})
uses a result relating solvability and boundedness (Proposition \ref{prop:Let--be})
to determine which scaling of a function makes a Poisson problem solvable,
then deduces the ratio of the integral of two functions on a domain
of interest. This approach is useful for finding the average values of
functions on surfaces, as shown in the tests of \ref{subsec:Average-Values}, and can be used directly on closed surfaces without splitting them into subdomains.
We also demonstrated numerically that reasonable estimates of the
average value of a function can be obtained using our approach even
on irregular, non-uniform point distributions. Method 2 (Subsection
\ref{subsec:Dimension-Reduction}) uses the divergence theorem to
reduce surface integrals to line integrals and takes advantage of
the ability of symmetric meshfree methods to find one of many possible
solutions to PDEs; this avoids imposing boundary conditions. Avoiding boundary conditions is helpful, since PDEs with boundary conditions may not permit a smooth solution if the boundary is only piecewise smooth. Unlike method 1, method 2 computes integrals directly, not as a ratio of integrals. However, method 2 requires working on a surface with boundary; for closed surfaces we introduce an artificial boundary by cutting the surface into at least two patches, as discussed in \ref{subsec:Meshfree-Domain-Decomposition}. We demonstrated method 2's accuracy
in \ref{subsec:Domain-Decomposition-and} and \ref{subsec:Planar-Boundary-Surface},
where we estimated the surface area of a genus-two surface considerably
more accurately than a standard triangulation approach. Both methods
also preserve the super-algebraic approximation properties of meshfree
interpolation methods and do not require prior knowledge of the probability
distribution from which a point cloud is sampled; this differs from
Monte Carlo methods which are $\O(\tilde{N}^{-\frac{1}{2}})$ and
typically assume a certain sampling distribution.

In Subsection \ref{subsec:High-Order-Approximation-of}, we generalized
the optimized Hermite-Birkhoff problem solved by symmetric meshfree
methods to handle singularities while maintaining high-order convergence.
We
then used the approach for solving a Poisson problem with a singular
source term to evaluate singular integrals in 2D and on a surface,
achieving rapid convergence in both cases.

Additional concepts we introduced included separable basis functions
for Fourier series meshfree methods (see the end of Example \ref{exa:Consider-placing-a}),
which allow for the rapid and systematic evaluation of matrices that
appear when using a symmetric matrix approach to solving the optimization
problem; solving $\vc{\Phi\beta}=\vc f$ directly is known as RBF-direct
in the literature when radial basis functions are used. Constructing
the correct $\vc{\Phi}$ matrices when using Hermite RBFs can be difficult,
particularly for surface PDEs, since it involves computing multiple
derivatives of the original RBF. Even for a simple Laplace-Beltrami
problem, all fourth-order derivatives of the RBF must be computed
to form the correct $\vc{\Phi}$, which can be cumbersome. Fourier
series, particularly separable ones, are straightforward to work with
by comparison and allow for greater flexibility in balancing conditioning
and convergence, since the weights $d_{n}>0$ can be chosen freely.
We also introduced simple, automated approaches for implicit surface
domain decomposition for meshfree methods (Subsection \ref{subsec:Meshfree-Domain-Decomposition}).

In future work, we seek to use the introduced integration techniques
for solving PDEs numerically (in weak form or via boundary integral
methods). This will involve developing a procedure to “stabilize”
the integration weights; high-order integration schemes, including
the ones presented here, can produce negative integration weights.
These jeopardize the stability of time-stepping methods. We have observed
that points with negative weights can be repeatedly removed to eventually
produce a positive set of weights while maintaining accuracy, but
this warrants further testing. Adaptive point selection techniques
are another area of future work, since the meshfree nature of these
approaches allows complete freedom in point placement without the
need to repeatedly re-mesh.

\bibliographystyle{siamplain}

\end{document}